\newtheorem{theorem}{\textbf{Theorem}}
\newtheorem{lemma}{\textbf{Lemma}}
\newtheorem{remark}{\textbf{Remark}}
\newtheorem{proof}{\textbf{Proof}}
\begin{document}

\title{An impulsive modelling framework of fire occurrence in a size structured model of tree-grass interactions for savanna ecosystems}
\author{V. Yatat$^{1,2}$, P. Couteron$^3$, J.J. Tewa$^{1,2}$, S. Bowong$^{4,2}$, and Y. Dumont$^5$\footnote{Corresponding author: yves.dumont@cirad.fr} \\
$^1$University of Yaounde I, LIRIMA, GRIMCAPE team, Cameroon\\
$^2$IRD, UMI 209, UMMISCO, IRD France Nord, F-93143, Bondy, France\\
$^3$IRD, Umr AMAP, Montpellier, France \\
$^4$University of Douala, LIRIMA, GRIMCAPE team, Cameroon \\
$^5$CIRAD, Umr AMAP, Montpellier, France}
%
%\keywords{Asymmetric competition -- Savanna -- Impulse fire --
%Impulsive modelling -- Qualitative analysis -- Nonstandard numerical
%scheme}
%
\maketitle
\begin{abstract}
Fires and rainfall are major mechanisms that regulate woody and
grassy biomasses in savanna ecosystems. Conditions of long-lasting
coexistence of trees and grasses have been mainly studied using
continuous-time modelling of tree-grass competition. In these
frameworks, fire is a time-continuous forcing while the relationship
between woody plant size and fire-sensitivity is not systematically
considered. In this paper, we propose a new mathematical framework
to model tree-grass interaction that takes into account both the
discrete nature of fire occurrence and size-dependent fire
sensitivity (via two classes of woody plants). We carry out a
qualitative analysis that highlights ecological thresholds and
bifurcations parameters that shape the dynamics of the savanna-like
systems within the main ecological zones. Moreover, through a 
qualitative analysis, we show that the impulsive modelling of fire
occurrences leads to more diverse behaviors and a more realistic array of solutions than the analogous
time-continuous fire models. Numerical simulations are provided to illustrate the theoretical results 
and to support a discussion about the bifurcation parameters and future developments.
\end{abstract}
\paragraph*{key words:}Asymmetric competition -- Savanna -- Fire --
Impulsive differential equation -- Qualitative analysis -- Nonstandard finite difference
scheme
\section{Introduction}
Savannas are ecosystems with fairly continuous grass cover and
variable woody cover (Maurin et al. (2014) \cite{Maurin2014}).
 However, savanna-like ecosystems are diverse and cover extensive areas
throughout the tropics. Explanations found in the literature about
the possible long-lasting coexistence of woody and grassy vegetation
components therefore relate to diverse factors and processes
depending on the location and the ecological context (Baudena et al.
(2014) \cite{Baudena2014}). Several studies have pointed towards the
role of stable ecological factors e.g. climate, in shaping the tree
to grass ratio along large-scale gradients of rainfall or soil
fertility (Sankaran et al. (2005) \cite{Sankaran2005}, (2008)
\cite{Sankaran2008}). Other studies have rather emphasized the
reaction of vegetation to recurrent disturbances such as herbivory
or fire (Langevelde et al. (2003) \cite{Langevelde2003}, D'Odorico
et al. (2006) \cite{Dodorico2006}, Sankaran et al. (2008)
\cite{Sankaran2008}, Smit et al. (2010) \cite{Smit2010}, Favier et
al. (2012) \cite{Favier2012} and references therein). Those two
points of view are not mutually-exclusive since both environmental
control and disturbances may co-occur in a given area and along
ecological gradients, although their relative importance generally
varies among ecosystems. Bond et al. (2003) \cite{Bond2003} proposed
the name of climate-dependent for ecosystems which physiognomies are
highly dependent on climatic conditions (rainfall, soil moisture)
versus disturbance-dependent for ecosystems which dynamics is
strongly dependent on fires or herbivores.
\par

 Several
models using a system of ordinary differential equations (ODES) have
been proposed to depict and understand the dynamics of woody and
herbaceous components in savanna-like vegetation. A first attempt
(Walker et al. (1981) \cite{Walker1981}) was orientated towards
semiarid fireless savannas and analyzed the effect of herbivory and
drought on the balance between woody and herbaceous biomass. This
model refers to ecosystems immune to fire due to insufficient annual
rainfall and grass production. Indeed, fires in savanna-like
ecosystems mostly rely on herbaceous biomass that has dried up
during the dry season. As long as rainfall is sufficient, fires
impact seedlings and saplings within the flame zone and thus let
grasses indirectly inhibit tree establishment.

\par
More recently, several attempts have been made (see Langevelde et
al. (2003) \cite{Langevelde2003}, Accatino et al. (2010)
\cite{Accatino2010}, De Michele et al. (2011) \cite{Demichele2011},
Tchuinte et al. (2014) \cite{Tchuinte2014}, Yatat et al. (2014)
\cite{Yatat2014}) to model the dynamics of savannas, taking into
account fires as continuous events, on the basis of the initial
framework of Tilman (1994) \cite{Tilman1994} that used coupled
 ODES to model the competitive interactions between two kinds of plants.
\par
 However, it is questionable to
model fire as a continuous forcing that continuously removes
fractions of fire sensitive biomass. Indeed, several months and even
years can past between two successive fires, such that fire may be
considered as an instantaneous perturbation of the savanna
ecosystem. Several recent papers have proposed to model fires as
stochastic events while keeping the continuous-time differential
equation framework (Baudena et al. (2010) \cite{Baudena2010},
Beckage et al. (2011) \cite{Beckage2011}) or using time discrete
matrix models (Accatino \& De Michele (2013) \cite{Accatino2013}).
But in all those examples, fire characteristics remain mainly a
linear function of grass biomass which is not satisfactory. Indeed,
it is well known that at low grass biomass there is no fires while
above a sufficient grass biomass, fires intensity increases rapidly
before reaching a saturation. This particular feature of fires in
savanna vegetation cannot be modeled by a linear function. Another
drawback of the aforementioned recent stochastic models (Baudena et
al. (2010) \cite{Baudena2010}, Beckage et al. (2011)
\cite{Beckage2011}) is that they barely lend themselves to
analytical (qualitative) approaches.
\par
 In this paper, we therefore
present a model that differs from most published models and extends
the work of Yatat et al. (2014) \cite{Yatat2014} by modelling
discrete fire occurrences. We consider a tree-grass compartmental
model. We set one compartment for grass biomass and two for trees,
namely fire-sensitive individuals having most of their buds within
the flame zone (like seedlings, saplings, shrubs) and non-sensitive
mature trees having at least their upperparts above the flame zone.
We therefore develop a system of three coupled non-linear impulsive
differential equations (IDES), one equation per vegetation
compartment, that describes savanna dynamics. In addition, we model
fire intensity (and the corresponding impact on sensitive woody
plants) as an increasing nonlinear and bounded function of grass
biomass. Finally, fire occurrences are modeled as pulse-like
perturbations.
\par
In order to assess and illustrate the dynamics of some ecological
formations through our mathematical model, we distinguish in our
numerical computations three climatic zones having distinct
characteristics in terms of biomass production. These biomass
production zones, loosely relate to Africa and are indexed by their
carrying capacity for grass and woody biomasses and by fires return
times. %We consider a semi-arid area, a mesic area and a humid
%tropical area.
Semi-arid areas have a mean annual rainfall that varies between 300
$mm. yr^{-1}$ and 650 $mm. yr^{-1}$, and fires, if any have return
intervals relatively long, says less than one fire event every ten
years. Mesic savannas have a mean annual rainfall comprised between
650 $mm. yr^{-1}$ and 1100 $mm. yr^{-1}$ and fire return time is in
order of four or five years, sometimes less. Finally, we consider a
humid tropical area with a mean annual rainfall between 1100 $mm.
yr^{-1}$--1800 $mm. yr^{-1}$ and where one can have a fire return
time from less than one year to two-three years.
\par
Although impulsive differential equations appear highly relevant to
model vegetation dynamics in fire prone savannas, they are also
difficult in terms of analytical treatments. This may explain why
this framework has still remained scarcely used for modelling the
dynamics of fire prone savannas. Our model aims to acknowledge three
major phenomena: the periodic occurrence of fire events, the
fire-mediated, non-linear negative feedback of grasses onto
sensitive trees and the negative and/or positive feed-back,
depending on location, of insensitive trees on grasses. We therefore
explicitly model the occurrence of fires in savanna ecosystems and
the asymmetric nature of tree-grass competitive interactions in
savannas.

\par
The full impulse fire model of asymmetric tree-grass competition
(IFAC) is formulated in Section \ref{formulation}. In Section
\ref{QA} we reach qualitative analytical results for IFAC through
which we highlight some meaningful ecological thresholds that
summarize savanna dynamics under impulsive fires. %We also show in
%Section \ref{QA} that the impulsive fire model has more diverse
%dynamical outcomes than the continuous one presented in Yatat et al.
%(2014) \cite{Yatat2014}.
We present a nonstandard numerical scheme
for the IFAC model in Section \ref{NS} together with the IFAC
parameters ranges. In section \ref{discussion} we present numerical
simulations done in the three ecological biomass production areas of
the African continent. Moreover, Section \ref{discussion} also deals
with the discussion of our results.

\section{The impulse fire model of asymmetric tree-grass competition (IFAC)
formulation}\label{formulation}
 As we have mentioned before, we
consider vegetation as composed of three classes, i.e. the class of
sensitive tree biomass $(T_S)$, the class of non-sensitive tree
biomass $(T_{NS})$ and the class of grass biomass $(G).$ We model
the fire intensity by a non-linear increasing function of grass
biomass $w(G)$. To built up our model, we consider the following
assumptions where  (A4), (A5), (A6) and (A7) are already described
in Yatat et al. (2014) \cite{Yatat2014}.
\begin{itemize}
    \item[\textbf{(A1)}] A carrying capacity $K_T$ for tree biomass (in tons per
    hectare, $t.ha^{-1}$).
    \item[\textbf{(A2)}] A carrying capacity $K_G$ for grass biomass (in tons per
    hectare, $t.ha^{-1}$).
    \item[\textbf{(A3)}] Fire events occur periodically, i.e. every $\tau$-time,
    where $\tau=\displaystyle\frac{1}{f}$ and $f$ denotes the fire frequency.
    \item[\textbf{(A4)}] Fire only impacts grass and sensitive Tree and, fire intensity is an increasing function of the grass biomass
    \item[\textbf{(A5)}]The Grass biomass has a direct, depressing effect on the Sensitive Trees.
    \item[\textbf{(A6)}]Non Sensitive Trees have a depressive or facilitation effect on grass biomass by shading.
    \item[\textbf{(A7)}]Sensitive tree biomass moves to non-sensitive tree biomass after an averaged
    time $\displaystyle\frac{1}{\omega_S}$ (in years).
    \end{itemize}
The following diagram summarize the relationship between the three
compartments\vspace{-1cm}
\begin{figure}[H]
\begin{center}
\definecolor{qqqqff}{rgb}{0,0,1}
\begin{tikzpicture}[line cap=round,line join=round,>=triangle 45,x=1.0cm,y=1.0cm,scale=0.9]
\clip(-2.62,-7.8) rectangle (29.06,6.1); \draw [line
width=1.2pt,color=qqqqff] (-1,4)-- (-1,2); \draw [line
width=1.2pt,color=qqqqff] (-1,2)-- (1,2); \draw [line
width=1.2pt,color=qqqqff] (1,2)-- (1,4); \draw [line
width=1.2pt,color=qqqqff] (1,4)-- (-1,4); \draw [line
width=1.2pt,color=qqqqff] (7,4)-- (7,2); \draw [line
width=1.2pt,color=qqqqff] (7,2)-- (9,2); \draw [line
width=1.2pt,color=qqqqff] (9,2)-- (9,4); \draw [line
width=1.2pt,color=qqqqff] (9,4)-- (7,4); \draw [rotate
around={0:(4,-1.93)},line width=1.2pt] (4,-1.93) ellipse (2cm and
1.07cm); \draw [shift={(-1.42,1.5)},line width=1.2pt]
plot[domain=-1.04:0.34,variable=\t]({1*1.51*cos(\t r)+0*1.51*sin(\t
r)},{0*1.51*cos(\t r)+1*1.51*sin(\t r)}); \draw
[shift={(1.67,-1.99)},line width=1.2pt]
plot[domain=0.76:5,variable=\t]({1*0.61*cos(\t r)+0*0.61*sin(\t
r)},{0*0.61*cos(\t r)+1*0.61*sin(\t r)}); \draw
[shift={(2.95,-3.4)},line width=1.2pt,dash pattern=on 5pt off 5pt]
plot[domain=-1.02:0.58,variable=\t]({1*0.77*cos(\t r)+0*0.77*sin(\t
r)},{0*0.77*cos(\t r)+1*0.77*sin(\t r)}); \draw
[shift={(6.08,-3.26)},line width=1.2pt]
plot[domain=2.87:3.74,variable=\t]({1*1.31*cos(\t r)+0*1.31*sin(\t
r)},{0*1.31*cos(\t r)+1*1.31*sin(\t r)}); \draw
[shift={(0,4.26)},line width=1.2pt]
plot[domain=0.02:3.54,variable=\t]({1*0.68*cos(\t r)+0*0.68*sin(\t
r)},{0*0.68*cos(\t r)+1*0.68*sin(\t r)}); \draw [->,line
width=1.2pt] (-1,3) -- (-2.46,3); \draw [->,line width=1.2pt]
(1,2.54) -- (7,2.54); \draw [->,line width=1.2pt] (6,-2) --
(7.5,-2.02); \draw [line width=1.2pt] (7.78,2)-- (6.58,-2.01); \draw
[line width=1.2pt] (2.44,-1.25)-- (-0.32,0.47); \draw [->,line
width=1.2pt] (0.68,4.28) -- (0.7,4); \draw [->,line width=1.2pt]
(-0.66,0.2) -- (-1,0); \draw [->,line width=1.2pt] (3.36,-4.06) --
(3.06,-4.32); \draw [->,line width=1.2pt] (5,-4) -- (5.26,-4.38);
\draw [->,line width=1.2pt] (1.84,-2.58) -- (2.21,-2.41); \draw
(1.72,3.94) node[anchor=north west] {$ \gamma_{NS}$}; \draw
(-0.56,3.46) node[anchor=north west] {\Large{$T_S$}}; \draw
(3.46,-1.64) node[anchor=north west] {\Large{$G$}}; \draw
(7.16,3.46) node[anchor=north west] {\Large{$T_{NS}$}}; \draw
(3.16,2.46) node[anchor=north west] {$\omega_S$}; \draw (-0.58,5.56)
node[anchor=north west] {$ \gamma_{S}$}; \draw (0.16,-1.76)
node[anchor=north west] {$ \gamma_{G}$}; \draw (5.18,-3.46)
node[anchor=north west] {$\mu_{G}$}; \draw [->,line width=1.2pt]
(9,3) -- (10.5,2.96); \draw (9.3,3.68) node[anchor=north west]
{$\mu_{NS}$}; \draw (-2.16,3.7) node[anchor=north west] {$\mu_{S}$};
\draw (-2.06,0.56) node[anchor=north west] {$\sigma_GG  $}; \draw
(7.4,-1.6) node[anchor=north west] {$\sigma_{NS} T_{NS}$}; \draw
(2.18,1.36) node[anchor=north west] {$ \eta_Sw(G)$}; \draw
(2.42,-3.44) node[anchor=north west] {$\eta_G$}; \draw
[shift={(1.72,2.16)},line width=1.2pt,dash pattern=on 5pt off 5pt]
plot[domain=3.27:4.95,variable=\t]({1*1.19*cos(\t r)+0*1.19*sin(\t
r)},{0*1.19*cos(\t r)+1*1.19*sin(\t r)}); \draw [->,line
width=1.2pt] (2,1) -- (2.3,1.08); \draw [shift={(5.53,1.42)},line
width=1.2pt]  plot[domain=0.96:2.17,variable=\t]({1*2.58*cos(\t
r)+0*2.58*sin(\t r)},{0*2.58*cos(\t r)+1*2.58*sin(\t r)}); \draw
[shift={(2.58,5.71)},line width=1.2pt]
plot[domain=4.24:5.32,variable=\t]({1*2.62*cos(\t r)+0*2.62*sin(\t
r)},{0*2.62*cos(\t r)+1*2.62*sin(\t r)}); \draw [->,line
width=1.2pt] (1.38,3.38) -- (1,3.6); \draw [line width=1.2pt,dash
pattern=on 5pt off 5pt] (2.76,-1.09)-- (1.22,1.15);
\end{tikzpicture}
\end{center}
\vskip-100pt
 \caption{Compartmental diagram of the size structured tree-grass interaction model in impulse fires-prone savanna.}
\label{diagramme}
\end{figure}
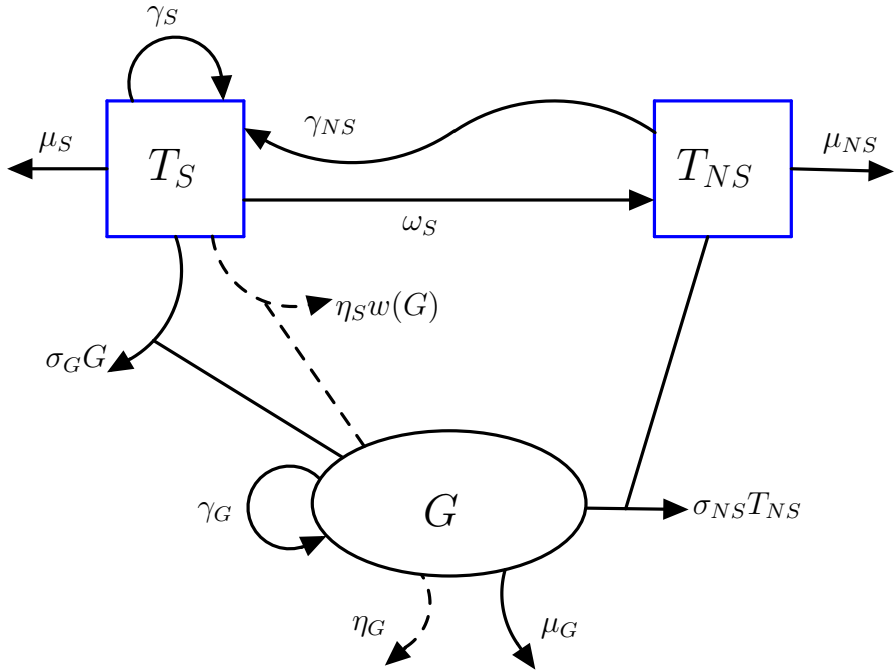

The following parameters are considered throughout the paper:
\begin{itemize}
    \item Sensitive tree biomass is made up from non sensitive tree
    biomass (i.e. seed production and germination) with the rate $\gamma_{NS}~(in~yr^{-1})$ and from existing sensitive
    tree biomass (i.e. intrinsic growth) with the rate $\gamma_S~(in~yr^{-1})$.%\\
    \item Grass biomass is made up from existing grass biomass with the intrinsic growth rate
    $\gamma_{G}~(in~yr^{-1})$.
    \item $\mu_S~(in~yr^{-1})$ is an additional death rate  of sensitive tree biomass due to external disturbances such as human activities and herbivory.
    \item $\mu_{NS}~(in~yr^{-1})$ is the natural death rate  of non sensitive tree biomass.
    \item $f$ is the fire frequency (in $yr^{-1}$).
    \item $\mu_G~(in~yr^{-1})$ is an additional death rate of grass biomass due to factors including human activities and herbivory.
    \item $\displaystyle\frac{1}{\omega_S}$ (in $yr$) is the average time that a sensitive tree takes to become
     non sensitive to fire.
    \item $\sigma_G$ expresses the asymmetric competition exerted by grasses on sensitive trees (shading and competition for nutrients, in
    $ha.t^{-1}.yr^{-1}$).
    \item $\sigma_{NS}$ expresses the asymmetric interaction (competition and/or facilitation) of non sensitive trees on grasses (shading and competition for nutrients, in
    $ha.t^{-1}.yr^{-1}$).
    \item $\eta_{S}$ is the proportion of sensitive tree biomass that is consumed by fire.
    \item $\eta_{G}$ is the proportion of grass biomass that is consumed by fire.
\end{itemize}

Based on these ecological premises,  and  taking into account  the
effect of fire as pulse phenomena, we propose a model for the
savanna vegetation dynamics which is and extension of the model
formulated in Yatat et al. (2014) \cite{Yatat2014}.

The IFAC is given by
\begin{equation}\label{fa}
 \left\{%
\begin{array}{lclcr}
  \displaystyle\frac{dT_S}{dt} &=& (\gamma_ST_S+\gamma_{NS}T_{NS})\left(1-\displaystyle\frac{T_S+T_{NS}}{K_T}\right)-T_{S}(\mu_S+\omega_S+\sigma_G G),& & \\
  & & & & \\
  \displaystyle\frac{dT_{NS}}{dt} &=& \omega_ST_S-\mu_{NS}T_{NS},& & t\neq t_k\\
  & & & & \\
  \displaystyle\frac{dG}{dt} &=&
  \gamma_G\left(1-\displaystyle\frac{G}{K_G}\right)G-(\sigma_{NS}T_{NS}+\mu_G)G, & &
\end{array}
\right.
\end{equation}

\begin{equation}\label{ifa}
 \left\{%
\begin{array}{lclcr}
  T_S(t_k^+) &=& (1-\eta_Sw(G(t_k)))T_S(t_k),& & \\
  T_{NS}(t_k^+) &=& T_{NS}(t_k),&t=t_k & t_{k+1}=t_k+\tau\\
  G(t_k^+) &=& (1-\eta_G)G(t_k), & &
\end{array}
\right.
\end{equation}
with
\begin{equation}\label{ci}
  T_S(0) = T_{S_0}>0,~T_{NS}(0) =T_{NS_0}\geq0~,  ~ G(0) = G0>0.
\end{equation}
For this impulsive fire model, the fire intensity function $w$ is a
continuous and positive function of grass biomass which is bounded
above by unity. As in Tchuinte et al. (2014) \cite{Tchuinte2014} and
Yatat et al. (2014) \cite{Yatat2014}, one can choose a generic
sigmoidal function (see also Staver et al. (2011)
\cite{Staver2011}). A typical choice could be
    \begin{equation}
   w(G)=\displaystyle\frac{G^\alpha}{G^\alpha+g_0^\alpha},
   \end{equation}
   where $G_0=g_0^\alpha$ is the value of grass biomass at which fire intensity reaches its
   half saturation ($g_0$ in tons per
    hectare, $t.ha^{-1}$) and $\alpha\in \mathbb{N}^*$.\\
The feasible region for system $(\ref{fa})-(\ref{ifa})$ is the set
$\Omega$ defined by
\begin{equation}
    \begin{array}{ccl}
      \Omega & = & \{(T_S; T_{NS}; G) \in \mathbb{R}^3_+~|~0\leq T_S+T_{NS}\leq K_T; 0\leq G\leq
      K_G\}.
    \end{array}
\end{equation}

\section{Mathematical Analysis and Ecological Interpretation of Thresholds}\label{QA}

\subsection{Existence of solution} The right-hand side of
system (\ref{fa})-(\ref{ifa}) is locally lipschitz continuous on
$\Omega$. Thus, using a classic existence Theorem (Theorem 1.1 P. 3
in Bainov and Simeonov (1995) \cite{Bainov1995}), system
(\ref{fa})-(\ref{ifa})-(\ref{ci}) has a unique solution on $\Omega$.

\subsection{Trivial and semitrivial solutions} It is obvious that
system $(\ref{fa})-(\ref{ifa})$ has always a desert equilibrium\\
$E_0=(0,0,0)$.

\subsubsection{The positive grassland periodic solution: existence
and local stability\\}

Let us consider the following thresholds:
\begin{equation}
\mathcal{R}_G^0=\displaystyle \frac{\gamma_G}{\mu_G}, \qquad\mu_G>0,
\label{seuilR20}
\end{equation}

and
\begin{equation}
\rho_{G}^0=\left\{
\begin{array}{ll}
\displaystyle (1-\eta_G)\exp\left(\mu_G\left(\mathcal{R}_G^0-1\right) \tau \right), \qquad &\mu_G>0, \\
(1-\eta_G)\exp\left(\gamma_G\tau\right), \qquad &\mu_G=0.
\end{array}
\right. \label{seuilR20tauetaG}
\end{equation}
Assume that $\mathcal{R}_G^0>1 $, we have the following result (see
also Dai et al. (2012) \cite{Dai2012})
\begin{lemma}\label{existenceEG}
When $\rho_{G}^0>1$, System
$(\ref{fa})-(\ref{ifa})$ has a positive grassland periodic solution
$E_G=(0; 0; G^*(t))$, where

\begin{equation}\label{getoile}
G^*(t)=\left\{
\begin{array}{ccc}
  \displaystyle\frac{\displaystyle
K_G\left(1-\displaystyle\frac{1}{\mathcal{R}_G^0}\right)
\left(\rho_G^0-1\right)}{(\rho_G^0-1)+\eta_Ge^{-\mu_G\left(\mathcal{R}_G^0-1\right)(t-(n+1)\tau)}}, & \mu_G>0, &  \\
   &  & n\tau\leq
t<(n+1)\tau, \\
 \displaystyle
\frac{\displaystyle
K_G\left(\rho_G^0-1\right)}{(\rho_G^0-1)+\eta_Ge^{-\gamma_G(t-(n+1)\tau)}},
& \mu_G=0. &
\end{array}
\right.
\end{equation}

\end{lemma}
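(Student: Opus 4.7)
The plan is to reduce the $3$-dimensional impulsive system to a scalar impulsive logistic equation, solve the scalar equation explicitly on each inter-fire interval, and then determine a $\tau$-periodic orbit via a stroboscopic fixed point argument.

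First I would check that the set $\{T_S=T_{NS}=0\}$ is positively invariant under both the continuous flow (\ref{fa}) and the jump map (\ref{ifa}): if $T_S\equiv T_{NS}\equiv 0$ on some interval, the first two equations in (\ref{fa}) reduce to $\dot T_S=\dot T_{NS}=0$, and the jumps keep these components at zero. On this manifold the surviving equation is the logistic-with-harvesting equation $\dot G=(\gamma_G-\mu_G)G-(\gamma_G/K_G)G^2$, subject to the impulse $G(t_k^+)=(1-\eta_G)G(t_k)$ at $t_k=k\tau$. A $\tau$-periodic positive solution of the full system is therefore equivalent to a $\tau$-periodic positive solution of this scalar impulsive equation.

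Next I would integrate the scalar equation on a generic interval $(n\tau,(n+1)\tau)$ via the Bernoulli substitution $u=1/G$, which linearises it to $\dot u+(\gamma_G-\mu_G)u=\gamma_G/K_G$ (with $a:=\gamma_G-\mu_G=\mu_G(\mathcal R_G^0-1)$ and $b:=\gamma_G/K_G$, so that $a/b=K_G(1-1/\mathcal R_G^0)$). Its general solution is
\begin{equation*}
\frac{1}{G(t)}=\frac{b}{a}+C_n\,e^{-a\,(t-(n+1)\tau)},
\end{equation*}
with $C_n$ fixed by $G(n\tau^+)$. I would then set up the stroboscopic (Poincaré) map $\mathcal P:G(n\tau^+)\mapsto G((n+1)\tau^+)=(1-\eta_G)G((n+1)\tau)$ obtained by composing the flow on one period with the jump. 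Demanding a fixed point $G^\star:=\mathcal P(G^\star)$ gives, after inverting the substitution and using $\rho_G^0=(1-\eta_G)e^{a\tau}$, a linear equation in $1/G^\star$ whose unique positive solution exists precisely when $\rho_G^0>1$, because $\rho_G^0-1$ appears in the denominator. Plugging this $G^\star$ back as the initial condition of the explicit solution on $[n\tau,(n+1)\tau)$ and simplifying yields exactly the closed form (\ref{getoile}).

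The technical obstacle is the algebraic reshaping into the form stated in the lemma: one must express the constants entirely through the dimensionless thresholds $\mathcal R_G^0$ and $\rho_G^0$ and shift the time origin to $(n+1)\tau$ (the right endpoint of the interval), which is the non-obvious choice that produces the compact denominator $(\rho_G^0-1)+\eta_G\,e^{-\mu_G(\mathcal R_G^0-1)(t-(n+1)\tau)}$. The limit case $\mu_G=0$ cannot be read off directly from the formula with $\mathcal R_G^0$ (since $\mathcal R_G^0$ is undefined), so I would repeat the same calculation with $a=\gamma_G$, $b=\gamma_G/K_G$ and $a/b=K_G$; this produces the second branch of (\ref{getoile}). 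Finally I would verify the matching conditions $G^*((n+1)\tau^-)(1-\eta_G)=G^*((n+1)\tau^+)$ and $G^*(t+\tau)=G^*(t)$, which are immediate from the stroboscopic construction, and confirm that $G^\star>0$ together with positive invariance implies $G^*(t)>0$ for all $t$, as required for a genuine grassland periodic orbit.
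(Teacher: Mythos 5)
Your proposal is correct and follows essentially the same route the paper relies on (it gives no proof of its own, deferring to Dai et al.\ (2012)): restriction to the invariant manifold $\{T_S=T_{NS}=0\}$, Bernoulli linearisation of the impulsive logistic equation, and a stroboscopic fixed-point condition that yields $C=\eta_G b/(a(\rho_G^0-1))$ and hence positivity exactly when $\rho_G^0>1$. The algebra checks out, including the $\mu_G=0$ branch, so nothing further is needed.
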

\begin{remark}[Thresholds interpretation]
\begin{itemize}
    \item[$\bullet$] $\mathcal{R}_G^0$ is the average amount of biomass produced per unit of grass biomass during its
whole lifespan in absence of fires and depressing effect from non
sensitive trees but subject to additional mortality caused by human
activities or by herbivory.
    \item[$\bullet$] $\rho_{G}^0$ embodies the residual amount of grass biomass at any time-period that
fires occur, from the grass biomass produced per unit of grass
biomass.
\end{itemize}
\end{remark}
\begin{remark}
An obvious computation leads to:
\begin{equation}\label{seuil_etaG}
    \begin{array}{cclc}
      \mathcal{R}_G^0<1 & \Longrightarrow & \rho_G^0<1,& \\
      %\\
      \rho_G^0>1 & \Longleftrightarrow &
    \eta_G<1-\exp(-\mu_G(\mathcal{R}_G^0-1)\tau),&\mu_G>0,\\
    \rho_G^0>1 & \Longleftrightarrow &
    \eta_G<1-\exp(-\gamma_G\tau),&\mu_G=0.
    \end{array}
\end{equation}
\end{remark}

Let us state the following Lemma which will be helpful for the
sequel
\begin{lemma}\label{lemme_integrale}
When $\rho_{G}^0>1$,
\begin{equation}\label{integrale1}
    \begin{array}{llll}
     G_{int}:= &\displaystyle \frac{1}{\tau}\int_{n\tau}^{(n+1)\tau}G^*(s)ds & = &
      \displaystyle\frac{1}{\tau}\displaystyle\frac{K_G}{\gamma_G}\ln(\rho_G^0)=\frac{1}{\tau}\displaystyle\frac{K_G}{\gamma_G}
      \left(\ln(1-\eta_G)+(\gamma_G-\mu_G)\tau\right)>0.
    \end{array}
\end{equation}
\end{lemma}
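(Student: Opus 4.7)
The plan is to bypass the explicit formula \eqref{getoile} for $G^*$ (which would force a partial-fraction computation after the substitution $u=e^{-\mu_G(\mathcal{R}_G^0-1)(t-(n+1)\tau)}$) and to exploit instead the logistic ODE that $G^*$ satisfies on the open intervals between fires, together with the impulsive jump condition and periodicity. This turns the integral into a boundary difference that collapses to $\ln\rho_G^0$.

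First, I would note that on each interval $(n\tau,(n+1)\tau)$ the component $G^*$ satisfies
\begin{equation*}
\frac{dG^{*}}{dt}=(\gamma_G-\mu_G)G^{*}-\frac{\gamma_G}{K_G}(G^{*})^2,
\end{equation*}
because $T_S^{*}=T_{NS}^{*}=0$ along the grassland periodic solution. Since $G^{*}>0$ on this interval, I divide by $G^{*}$ to obtain
\begin{equation*}
\frac{d}{dt}\ln G^{*}(t)=(\gamma_G-\mu_G)-\frac{\gamma_G}{K_G}G^{*}(t).
\end{equation*}
Integrating between $n\tau^{+}$ and $(n+1)\tau$ gives
\begin{equation*}
\ln G^{*}((n+1)\tau)-\ln G^{*}(n\tau^{+})=(\gamma_G-\mu_G)\tau-\frac{\gamma_G}{K_G}\int_{n\tau}^{(n+1)\tau}G^{*}(s)\,ds.
\end{equation*}

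Next I use the impulse relation \eqref{ifa} combined with the $\tau$-periodicity of $G^{*}$: after the $n$-th fire the solution starts at $G^{*}(n\tau^{+})=(1-\eta_G)G^{*}(n\tau)$, and periodicity identifies $G^{*}(n\tau)$ with $G^{*}((n+1)\tau)$. Hence $\ln G^{*}((n+1)\tau)-\ln G^{*}(n\tau^{+})=-\ln(1-\eta_G)$, and rearranging yields
\begin{equation*}
\frac{\gamma_G}{K_G}\int_{n\tau}^{(n+1)\tau}G^{*}(s)\,ds=\ln(1-\eta_G)+(\gamma_G-\mu_G)\tau.
\end{equation*}
The right-hand side is exactly $\ln\rho_G^{0}$ by the definition \eqref{seuilR20tauetaG} (using $\mathcal{R}_G^{0}=\gamma_G/\mu_G$ when $\mu_G>0$, and directly when $\mu_G=0$), so dividing by $\tau$ produces the claimed identity.

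Finally, positivity of $G_{int}$ is immediate: the hypothesis $\rho_G^{0}>1$ gives $\ln\rho_G^{0}>0$, and both $K_G/\gamma_G$ and $1/\tau$ are positive. I do not anticipate a genuine obstacle; the only minor point to watch is that the impulse identity $G^{*}(n\tau^{+})=(1-\eta_G)G^{*}(n\tau)$ must be paired with periodicity in the correct direction, and that the $\mu_G=0$ case is covered by the same ODE argument without any change (the term $(\gamma_G-\mu_G)\tau$ simply becomes $\gamma_G\tau$, matching the second branch of \eqref{seuilR20tauetaG}).
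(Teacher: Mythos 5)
Your proof is correct. The paper states Lemma \ref{lemme_integrale} without giving a proof; the derivation it implicitly relies on is the direct one, namely substituting the closed form (\ref{getoile}) into the integral and evaluating it (e.g.\ via the substitution $v=e^{-\mu_G(\mathcal{R}_G^0-1)(s-(n+1)\tau)}$, which reduces the integrand to $\bigl((\rho_G^0-1)v+\eta_G\bigr)^{-1}$ and yields $\frac{K_G}{\gamma_G}\ln\rho_G^0$ after simplifying with $e^{-\mu_G(\mathcal{R}_G^0-1)\tau}=(1-\eta_G)/\rho_G^0$); I checked that this computation agrees with your answer. Your route is genuinely different and cleaner: you never touch the explicit formula, only the facts that $G^*$ is a positive $\tau$-periodic solution of the scalar impulsive logistic equation (guaranteed by Lemma \ref{existenceEG} under $\rho_G^0>1$, which licenses the division by $G^*$ and the logarithm), that $\frac{d}{dt}\ln G^*=(\gamma_G-\mu_G)-\frac{\gamma_G}{K_G}G^*$ between fires, and that the jump plus periodicity force $\ln G^*((n+1)\tau)-\ln G^*(n\tau^+)=-\ln(1-\eta_G)$. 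The orientation of the impulse/periodicity pairing is handled correctly (the paper's formula (\ref{getoile}) takes the post-impulse value at the left endpoint $n\tau$, but the endpoint values do not affect the integral), and the $\mu_G=0$ branch goes through verbatim. A side benefit of your argument is that it shows the mean value $G_{int}=\frac{K_G}{\gamma_G\tau}\ln\rho_G^0$ for \emph{any} positive $\tau$-periodic solution of the impulsive logistic subsystem, independently of the explicit expression, whereas the direct computation only establishes it for the particular formula (\ref{getoile}).
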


Now we turn to look for local stability of that previous positive
and periodic grassland solution. For that purpose, we will use the
small perturbation technique and the Floquet theory, i.e. we will
find conditions under which all the Floquet multipliers of the
positive and periodic grassland solution have their absolute value
less than unity or equal to unity (D'Onofrio (2002)
\cite{Donofrio2002}, Chen
et al. (2009) \cite{Chen2009}).\\
Let
\begin{equation}\label{seuilR2T}
\begin{array}{ccl}
\mathcal{R}_G^T&=&\displaystyle\frac{\gamma_S\mu_{NS}+\omega_S\gamma_{NS}}{\mu_{NS}(\mu_S+\omega_S)+\mu_{NS}\sigma_GG_{int}},
 \\
 &&\\
   & = & \displaystyle\frac{\gamma_S\mu_{NS}+\omega_S\gamma_{NS}}{\mu_{NS}(\mu_S+\omega_S)+\displaystyle\frac{\mu_{NS}\sigma_G}{\tau}\displaystyle\frac{K_G}{\gamma_G}\left(\ln(1-\eta_G)+(\gamma_G-\mu_G)\tau\right)},
\end{array}
\end{equation}
$$
  \mathcal{A}=\gamma_S\tau\left(1-\displaystyle\frac{1}{\mathcal{R}}\right),
$$
where
$$\mathcal{R}=\displaystyle\frac{\gamma_S}{\mu_S+\omega_S+\mu_{NS}+\sigma_GG_{int}}$$
and
$$
  \mathcal{B} =
  \tau^2\mu_{NS}\left(\mu_S+\omega_S+\sigma_GG_{int}\right)\left(1-\mathcal{R}_G^T\right).
$$
Moreover, let $\lambda_1$, $\lambda_2$ be the roots of
$$\mathcal{P}(\lambda)=\lambda^2-\mathcal{A}\lambda+\mathcal{B}$$ and

\begin{equation}\label{rhot}
\rho_T=\max\{(1-\eta_Sw(G^*(\tau)))e^{\lambda_1},~e^{\lambda_2}\}.
\end{equation}

The following result holds for System $(\ref{fa})-(\ref{ifa})$.
\begin{lemma}(Local stability of the grassland periodic solution
$E_G$)\label{stabiliteEG}\\
Suppose that the grassland periodic solution ($E_G$) exists i.e.\\
$\mathcal{R}_G^0>1$ and $\rho_G^0 >1.$\\
Moreover,
\begin{itemize}
    \item[$\bullet$] if $\mathcal{R}_G^T<1$ then, $E_G$ is locally
asymptotically stable,
    \item[$\bullet$] if $(\mathcal{R}_G^T>1$ and $\rho_T<1)$ then, $E_G$ is locally
asymptotically stable,
    \item[$\bullet$] if $(\mathcal{R}_G^T>1$ and $\rho_T=1)$ then, $E_G$ is locally stable.
    \item[$\bullet$] if ($\mathcal{R}_G^T>1$ and $\rho_T>1$) then, $E_G$ is
    unstable.
\end{itemize}
\end{lemma}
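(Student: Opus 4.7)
The plan is to apply Floquet theory for impulsive periodic linear systems to the variational equation around $E_G=(0,0,G^*(t))$. Writing $T_S=u$, $T_{NS}=v$, $G=G^*(t)+w$ with $u,v,w$ small and linearising (\ref{fa})--(\ref{ifa}), I obtain a $3\times 3$ system whose Jacobian is block lower triangular in the ordering $(u,v,w)$: since $\partial F_{T_S}/\partial G$ and $\partial F_{T_{NS}}/\partial G$ both vanish on $E_G$, the tree block
\[
A(t)=\left(\begin{array}{cc} \gamma_S-\mu_S-\omega_S-\sigma_G G^*(t) & \gamma_{NS} \\ \omega_S & -\mu_{NS} \end{array}\right)
\]
evolves independently of $w$, while the scalar $w$ equation is driven by $v$ through $-\sigma_{NS}G^*(t)\,v$. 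The impulsive Jacobian is the diagonal matrix $\mathrm{diag}(1-\eta_S w(G^*(\tau)),\,1,\,1-\eta_G)$. By block triangularity, the Floquet multipliers of the full monodromy matrix split into the two eigenvalues of the tree monodromy $M_{TT}:=\mathrm{diag}(1-\eta_S w(G^*(\tau)),1)\,\Phi_{TT}(\tau)$ and a single scalar multiplier from the $w$ direction.

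I first dispose of the $w$ direction. Its multiplier equals $(1-\eta_G)\exp\left(\int_0^\tau[\gamma_G-2\gamma_G G^*(s)/K_G-\mu_G]\,ds\right)$. Using Lemma \ref{lemme_integrale} to replace $\int_0^\tau G^*(s)\,ds$ by $\tau G_{int}$, together with the periodicity identity $\ln(G^*(\tau^-)/G^*(0^+))=-\ln(1-\eta_G)$, a short algebraic manipulation collapses the exponential and returns exactly $1/\rho_G^0$. Under the standing hypothesis $\rho_G^0>1$ this multiplier lies in the open unit disk, so local stability of $E_G$ is controlled entirely by the two eigenvalues of $M_{TT}$.

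For the tree block I compute its invariants and reduce to the quadratic $\mathcal{P}$. Integrating the trace gives $\int_0^\tau\mathrm{tr}\,A(t)\,dt=\tau(\gamma_S-\mu_S-\omega_S-\mu_{NS})-\sigma_G\tau G_{int}=\mathcal{A}$, so that $\det\Phi_{TT}(\tau)=e^{\mathcal{A}}$ and $\det M_{TT}=(1-\eta_S w(G^*(\tau)))e^{\mathcal{A}}$. Replacing $\sigma_G G^*(t)$ by its time-average $\sigma_G G_{int}$ in $A(t)$, the resulting autonomous matrix $\bar A$ satisfies $\tau\,\mathrm{tr}\,\bar A=\mathcal{A}$ and $\tau^2\det\bar A=\mathcal{B}$; this identifies $\lambda_1,\lambda_2$ as the eigenvalues of $\tau\bar A$ and $\mathcal{P}$ as its characteristic polynomial. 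In the basis diagonalising $\bar A$, the two eigenvalues of $M_{TT}\simeq\mathrm{diag}(1-\eta_S w(G^*(\tau)),1)\,e^{\bar A\tau}$ are $(1-\eta_S w(G^*(\tau)))e^{\lambda_1}$ and $e^{\lambda_2}$, whose larger modulus is $\rho_T$. The four conclusions then follow from the standard impulsive Floquet criterion (Bainov--Simeonov): when $\mathcal{R}_G^T<1$ one has $\mathcal{B}>0$ and a short trace--determinant argument forces $\rho_T<1$; when $\mathcal{R}_G^T>1$ one has $\mathcal{B}<0$, the exponents are real of opposite signs, and the three subcases $\rho_T<1$, $\rho_T=1$, $\rho_T>1$ translate directly into local asymptotic stability, Lyapunov stability, and instability of $E_G$ respectively.

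The main technical obstacle lies in the third paragraph: because $G^*(t)$ is genuinely periodic rather than constant, $\Phi_{TT}(\tau)$ is not literally $\exp(\bar A\tau)$, and identifying the two Floquet multipliers of $M_{TT}$ with $(1-\eta_S w(G^*(\tau)))e^{\lambda_1}$ and $e^{\lambda_2}$ requires an averaging or perturbation step in which only the mean value $G_{int}$ survives. The trace and determinant of $M_{TT}$ do depend only on this mean, which fixes $\mathcal{P}$ rigorously; the delicate point is ensuring that the full spectral radius of $M_{TT}$ is really governed by $\rho_T$ and is not disturbed by the eigenvector misalignment between $A(t)$ and the pulse matrix. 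Once this reduction is accepted, the trichotomy is a mechanical Jury-type exercise on the quadratic $\mathcal{P}$.
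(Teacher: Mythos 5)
Your proof follows essentially the same route as the paper's Appendix~A: linearise about $E_G$, exploit the block-triangular structure so that the $G$-direction Floquet multiplier collapses to $1/\rho_G^0$ (hence lies inside the unit disk when $\rho_G^0>1$) and the tree block reduces to the quadratic $\mathcal{P}(\lambda)=\lambda^2-\mathcal{A}\lambda+\mathcal{B}$, with the trichotomy settled by the signs of $\mathcal{A}$ and $\mathcal{B}$ via Routh--Hurwitz. The ``delicate point'' you flag in your last paragraph is genuine but is equally unaddressed in the paper, which simply asserts $\Phi(\tau)=\exp\left(\int_0^\tau DF(0;0;G^*(s))\,ds\right)$ without justifying this identity for the non-commuting time-dependent Jacobian, so your attempt matches the paper's argument (and is, if anything, more candid about its weakest step).
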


\begin{proof}(See Appendix A.)
\end{proof}

\begin{remark}[\textbf{Thresholds interpretation}]
In the sequel, we provide approximative thresholds interpretation in
order to favor an intuitive ecological comprehension of our results
with respect to these thresholds.
\begin{itemize}
    \item[$\bullet$] $\mathcal{R}_G^T$ is the sum of the average amount of biomass
produced by a sensitive/young plant competing with grass between two
successive fires, and the average amount of biomass produced by a
mature plant multiplied by the proportion of young plants which
reach the mature stage.
    \item[$\bullet$] $\rho_T$ embodies both the residual of the reduction of trees
    biomass due to periodic fires events and the reduction of sensitive tree
    biomass due to competition with grass biomass.\\
    Moreover, since $\mathcal{R}<\mathcal{R}_G^T$, after a direct
    computation one has
    \begin{equation}\label{SG}
        \begin{array}{ccc}
          \sigma_G<\displaystyle\frac{1}{G_{int}}\left(\gamma_S-(\mu_S+\omega_S+\mu_{NS})\right) & \Longrightarrow
          & \rho_T >1.
        \end{array}
    \end{equation}
Therefore, it clearly appears following relation (\ref{SG}) that the
grass vs. sensitive tree competition parameter $\sigma_G$ is a
bifurcation parameter for the IFAC model that embodies the
stability/instability of the grassland periodic solution.
\end{itemize}
\end{remark}

\subsubsection{The positive forest equilibrium: existence and local
stability\\} Let
\begin{equation}
\mathcal{R}_T^0=\displaystyle\frac{\gamma_S\mu_{NS}+\gamma_{NS}\omega_S}{\mu_{NS}(\mu_S+\omega_S)}.
\label{seuilR10}
\end{equation}
The following result follows from Proposition 1 in Yatat et al.
(2014) \cite{Yatat2014}.
\begin{lemma}
If $\mathcal{R}_T^0>1$ then system
 $(\ref{fa})-(\ref{ifa})$ has a positive forest equilibrium $E_T=(\bar{T}_S; \bar{T}_{NS}; 0)$,
 where
 \begin{equation}\label{ts}
        \begin{array}{ccc}
          \bar{T}_{S}&=&\displaystyle\frac{K_T\mu_{NS}}{\mu_{NS}+\omega_S}\left(1-\displaystyle\frac{1}{\mathcal{R}_T^0}\right),\\
          \bar{T}_{NS}&=&\displaystyle\frac{\omega_S}{\mu_{NS}}\bar{T}_{S}=\displaystyle\frac{K_T\omega_S}{\mu_{NS}+\omega_S}\left(1-\displaystyle\frac{1}{\mathcal{R}_T^0}\right).
        \end{array}
    \end{equation}
\end{lemma}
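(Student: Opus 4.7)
The plan is to show that a forest equilibrium is by definition a time-independent solution of the impulsive system, and because a sensible fire-intensity function satisfies $w(0)=0$, the impulses $T_S(t_k^+)=(1-\eta_S w(G(t_k)))T_S(t_k)$, $G(t_k^+)=(1-\eta_G)G(t_k)$ become trivial as soon as $G\equiv 0$. Hence the impulsive part of \eqref{fa}--\eqref{ifa} collapses and a forest equilibrium is just a zero of the right-hand side of the continuous system \eqref{fa} with third component $G=0$. The problem therefore reduces to the algebraic system already treated by Proposition~1 in Yatat et al.\ (2014), so one only needs to redo that two-line computation in the present notation.

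First I would substitute $G=0$ into the continuous ODEs. The third equation is automatically satisfied. The second equation gives the linear relation
\begin{equation*}
\omega_S T_S = \mu_{NS} T_{NS}, \qquad \text{i.e.} \qquad T_{NS}=\tfrac{\omega_S}{\mu_{NS}} T_S,
\end{equation*}
which immediately supplies the claimed expression of $\bar{T}_{NS}$ in terms of $\bar{T}_S$. Plugging this into the first equation and factoring out $T_S$ (which is legitimate since we are looking for a \emph{positive} equilibrium), I obtain
\begin{equation*}
\frac{\gamma_S\mu_{NS}+\gamma_{NS}\omega_S}{\mu_{NS}}\left(1-\frac{(\mu_{NS}+\omega_S)T_S}{K_T\mu_{NS}}\right)=\mu_S+\omega_S.
\end{equation*}
Recognising the left-hand prefactor as $\mathcal{R}_T^0\cdot(\mu_S+\omega_S)$ and rearranging yields $\bar{T}_S=\dfrac{K_T\mu_{NS}}{\mu_{NS}+\omega_S}\bigl(1-1/\mathcal{R}_T^0\bigr)$, which matches \eqref{ts}.

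Finally I would check that the candidate really lies in the feasible set $\Omega$ and is positive: positivity of $\bar{T}_S$ and $\bar{T}_{NS}$ is equivalent to $\mathcal{R}_T^0>1$, and the sum $\bar{T}_S+\bar{T}_{NS}=K_T(1-1/\mathcal{R}_T^0)<K_T$ is automatic, while $G=0\in[0,K_G]$. I would also note explicitly that $w(0)=0$ for the sigmoidal choice of $w$ so that the impulsive conditions reduce to the identity at this equilibrium, guaranteeing that $E_T$ is genuinely a fixed point of the full impulsive flow and not merely of its continuous part. The only mild obstacle is this last bookkeeping step—making sure that the impulsive map is compatible with the equilibrium—because everything else is a direct algebraic manipulation that mirrors the non-impulsive analysis cited from Yatat et al.\ (2014).
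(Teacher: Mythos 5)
Your proposal is correct and follows essentially the route the paper relies on: the paper gives no standalone proof but simply invokes Proposition~1 of Yatat et al.\ (2014), which is exactly the algebraic computation you carry out (substitute $G=0$, use $\omega_S T_S=\mu_{NS}T_{NS}$, factor out $T_S$, and solve for $\bar{T}_S$ via $\mathcal{R}_T^0$). Your additional check that $w(0)=0$ makes the impulsive maps act as the identity at $E_T$ is a point the paper leaves implicit, and it is a worthwhile inclusion since without it the equilibrium of the continuous part would not automatically be a fixed point of the full impulsive system.
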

As previously we are checking for local stability of the positive
forest equilibrium. Let
\begin{equation}\label{seuilR1G}
    \begin{array}{ccl}
      \mathcal{R}_T^G &=& \displaystyle\frac{\gamma_G}{\mu_G+\sigma_{NS}\bar{T}_{NS}}, \\
      \rho_T^G &=& (1-\eta_G)\exp\left(\gamma_G\left(1-\displaystyle\frac{1}{\mathcal{R}_T^G}\right)\tau\right).
    \end{array}
\end{equation}
Using the same approach as in the proof of Lemma \ref{stabiliteEG},
we derive the following result.
\begin{lemma}(Local stability of the forest equilibrium $E_T$)\label{stabiliteET}\\
Suppose that the forest equilibrium ($E_T$) exists, i.e.
$\mathcal{R}_T^0>1$.\\
Moreover,
\begin{itemize}
    \item[$\bullet$] if $\mathcal{R}_T^G\leq1$ then, $E_T^G$ is locally asymptotically stable,
    \item[$\bullet$] if $(\mathcal{R}_T^G>1$ and $\rho_T^G<1)$ then, $E_T$ is locally asymptotically stable,
    \item[$\bullet$] if $(\mathcal{R}_T^G>1$ and $\rho_T^G=1)$ then, $E_T$ is locally stable .
    \item[$\bullet$] if $(\mathcal{R}_T^G>1$ and $\rho_T^G>1)$ then, $E_T$ is unstable.
\end{itemize}
\end{lemma}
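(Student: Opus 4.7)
The plan is to mirror the Floquet-theoretic argument used in Lemma \ref{stabiliteEG}, the central simplification being that $G=0$ at $E_T$. I would set $T_S=\bar{T}_S+t_S$, $T_{NS}=\bar{T}_{NS}+t_{NS}$, $G=g$ and linearize the ODE system (\ref{fa}) together with the impulsive conditions (\ref{ifa}) around $E_T$, producing a Jacobian $M$ and an impulse matrix $\mathcal{I}$ that act on the perturbation $(t_S,t_{NS},g)$ between and across pulses respectively.

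The key structural observation is that both $M$ and $\mathcal{I}$ are block upper triangular with respect to the splitting $(t_S,t_{NS})\oplus g$. For $M$, the entries $\partial f_3/\partial T_S$ and $\partial f_3/\partial T_{NS}$ vanish at $E_T$ because $G=0$, while $\partial f_1/\partial G=-\sigma_G\bar{T}_S$ and $\partial f_2/\partial G=0$ sit in the upper off-diagonal block. For $\mathcal{I}$, the assumption $w(0)=0$ forces the linearized $T_S$-pulse to reduce to $t_S(t_k^+)=t_S(t_k)-\eta_S w'(0)\bar{T}_S\,g(t_k)$, so the $(t_S,t_{NS})$-diagonal block of $\mathcal{I}$ is the identity and the only off-diagonal coupling is again onto the $g$-column. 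Consequently the monodromy matrix $\mathcal{I}\exp(M\tau)$ is block upper triangular, and its Floquet multipliers split cleanly into two from the tree block and one from the grass block.

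The grass block is the scalar impulsive equation $g'=(\gamma_G-\mu_G-\sigma_{NS}\bar{T}_{NS})g$ with $g(t_k^+)=(1-\eta_G)g(t_k)$, whose Floquet multiplier is precisely $\rho_T^G$ as defined in (\ref{seuilR1G}). The tree block carries no pulse, so its two Floquet multipliers are the exponentials $e^{\lambda\tau}$ of the eigenvalues of the $2\times 2$ Jacobian $J$ of the reduced $(T_S,T_{NS})$-system at $E_T$. Invoking Proposition 1 of Yatat et al.\ (2014) \cite{Yatat2014}, $E_T$ is locally asymptotically stable for this reduced fireless ODE whenever $\mathcal{R}_T^0>1$, so both tree-block multipliers have modulus strictly less than one. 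The Floquet criterion of D'Onofrio (2002) \cite{Donofrio2002} then delivers the four regimes of the lemma: when $\mathcal{R}_T^G\le 1$ one automatically has $\rho_T^G\le 1-\eta_G<1$, giving asymptotic stability, while for $\mathcal{R}_T^G>1$ the verdict hinges entirely on whether $\rho_T^G$ is $<1$, $=1$, or $>1$.

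The main obstacle I anticipate is the verification that the tree-block eigenvalues are genuinely stable under the sole hypothesis $\mathcal{R}_T^0>1$; this is stated in the cited reference but, for self-containment, it can be re-derived from the equilibrium relations $\omega_S\bar{T}_S=\mu_{NS}\bar{T}_{NS}$ and $(\gamma_S\bar{T}_S+\gamma_{NS}\bar{T}_{NS})/\mathcal{R}_T^0=\bar{T}_S(\mu_S+\omega_S)$, which after routine algebra yield a negative trace and positive determinant for $J$ whenever $\mathcal{R}_T^0>1$.
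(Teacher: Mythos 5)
Your proposal is correct and follows essentially the same route as the paper, which proves this lemma by explicitly invoking "the same approach as in the proof of Lemma \ref{stabiliteEG}": linearize about $E_T$, exploit the block upper triangular structure of both the (here constant) Jacobian and the linearized impulse map, read off $\rho_T^G$ as the grass-block Floquet multiplier, and dispose of the two tree-block multipliers via the fireless stability of $(\bar T_S,\bar T_{NS})$ under $\mathcal{R}_T^0>1$ from Yatat et al.\ (2014). Your closing remark on rederiving the tree-block stability from the trace and determinant of the reduced Jacobian is a welcome self-containment the paper leaves implicit.
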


\begin{remark}[\textbf{Thresholds interpretation}]
As we mentioned before, we provide approximative thresholds
interpretation in order to favor an intuitive ecological
comprehension of our results with respect to these thresholds.
\begin{itemize}
    \item[$\bullet$] $\mathcal{R}_T^0$ is the sum of the average amount of biomass produced by a sensitive/young
plant, without fires and competition from grass, and the average
amount of biomass produced by a mature plant multiplied by the
proportion of young plants which reach the mature stage.\\ We may
note here that this threshold only depends on the parameters ruling
the dynamics of the woody biomass.
    \item[$\bullet$] $\mathcal{R}_T^G$ is the average biomass produced by a unit
of grass biomass during its whole lifespan free of fires while
experiencing competition from non-sensitive trees.
    \item[$\bullet$] $\rho_T^G$ embodies both the residual grass biomass after a fire event and the residual biomass of
    the depression of grass biomass due to competition from non-sensitive
    trees.\\
    One should also note that, when $\mathcal{R}_T^G>1$, one has
\begin{equation}\label{SF}
  \begin{array}{ccl}
  \rho_T^G\leq1 & \Longleftrightarrow  & \eta_G\geq1-\displaystyle\frac{1}{\exp\left(\gamma_G\left(1-\displaystyle\frac{1}{\mathcal{R}_T^G}\right)\tau\right)}  \\
  &&\\
  & \Longleftrightarrow & \tau \leq
  -\displaystyle\frac{\ln(1-\eta_G)}{\gamma_G\left(1-\displaystyle\frac{1}{\mathcal{R}_T^G}\right)}\\
  &&\\
  & \Longleftrightarrow &
  \displaystyle\frac{1}{\overline{T}_{NS}}\left(\gamma_G-\mu_G+\displaystyle\frac{\ln(1-\eta_G)}{\tau}\right)\leq\sigma_{NS}.
\end{array}
\end{equation}
    Therefore we can deduce three major observations
    \begin{itemize}
        \item[(i)] Firstly, if the fires period $\tau$ is small (i.e. the fires frequency $f$ is
        high) then
        $1-\displaystyle\frac{1}{\exp\left(\gamma_G\left(1-\displaystyle\frac{1}{\mathcal{R}_T^G}\right)\tau\right)}$
        is small and one can have $\rho_T^G\leq1$ for small values of
        $\eta_G$.
        \item[(ii)] Secondly, if the fires period $\tau$ is large (i.e. the fires frequency $f$ is
        small) then
        $$1-\displaystyle\frac{1}{\exp\left(\gamma_G\left(1-\displaystyle\frac{1}{\mathcal{R}_T^G}\right)\tau\right)}=1-\varepsilon,
        $$
        where $0<\varepsilon<<1$. Thus, $\varepsilon$ can be sufficiently small (such that
        $\rho_T^G\leq1$), to have a large destruction of grass biomass (more than 99\%). This is ecologically not possible  (according to the fact that a part of the grass biomass like roots and even the bottom of the tufts, cannot
        burn). Therefore, in case of large fires period, having
        $\rho_T^G\leq1$ may likely correspond to a decrease of $\mathcal{R}_T^G$ (i.e. an increase of $\sigma_{NS}$).
        \item[(iii)] It is easily deduced from relation (\ref{SF})
        that the non sensitive tree vs. grass competition/facilitation parameter
        $\sigma_{NS}$ and the fire return time $\tau$ are bifurcation parameters for the IFAC model that
        embody  stability/instability of the forest equilibrium.
    \end{itemize}
\end{itemize}
\end{remark}
\begin{remark}
A direct comparison leads to:
\begin{enumerate}
    \item $\mathcal{R}_G^T<\mathcal{R}_T^0$
    \item $\mathcal{R}_G^0<1\Longrightarrow \mathcal{R}_T^G<1,$
    \quad $\mu_G>0.$
\end{enumerate}
\end{remark}

\subsubsection{Global stability of trivial equilibrium (desert) and
semi-trivial solutions (grassland periodic solution and the forest
equilibrium)\\} Here, we state a result concerning the global
stability of the desert equilibrium, the forest equilibrium and a
result concerning the global stability of the grassland periodic
solution. Using the thresholds defined in (\ref{seuilR20}),
(\ref{seuilR20tauetaG}) and (\ref{seuilR10}), we have the following
\begin{theorem}\label{prop}
$\bullet$ Case 1: $\mu_G>0.$
\begin{enumerate}
    \item If $\mathcal{R}_T^0<1$ and $\mathcal{R}_G^0<1$ then the
    desert equilibrium $E_0$ is globally asymptotically stable (GAS).
    \item If $\mathcal{R}_T^0>1$ and $\mathcal{R}_G^0<1$ then the
    forest  equilibrium $E_T=(\bar{T}_S,\bar{T}_{NS},0)$, where $(\bar{T}_S,\bar{T}_{NS})$ are given
    in (\ref{ts}), is globally asymptotically stable.
    \item If $\mathcal{R}_T^0<1$, $\mathcal{R}_G^0>1$ and
    $\rho_G^0<1$
    then the
    desert equilibrium $E_0$ is globally asymptotically stable.
    \item If $\mathcal{R}_T^0<1$, $\mathcal{R}_G^0>1$ and $\rho_G^0>1$
    then the
    grassland periodic solution $E_G=(0,0,G^*(t))$, where $G^*(t)$ is given by (\ref{getoile}), is globally asymptotically stable.
    \end{enumerate}
\qquad  $\bullet$  Case 2: $\mu_G=0.$
\begin{itemize}
    \item[(i)]If $\mathcal{R}_T^0<1$ and $\rho_G^0<1$ then the
    desert equilibrium $E_0$ is globally asymptotically stable.
    \item[(ii)]If $\mathcal{R}_T^0<1$ and $\rho_G^0>1$
    then the
    grassland periodic solution $E_G=(0,0,G^*(t))$, where $G^*(t)$ is given by (\ref{getoile}), is globally asymptotically stable.
    \item[(iii)]If $\mathcal{R}_T^0>1$ and $\rho_G^0<1$ then the
    forest  equilibrium $E_T=(\bar{T}_S,\bar{T}_{NS},0)$, where $(\bar{T}_S,\bar{T}_{NS})$ are given
    in (\ref{ts}), is globally asymptotically stable.
    \end{itemize}
\end{theorem}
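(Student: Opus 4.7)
The plan is to exploit the quasi-cascade structure of the full system: once one of the two species (grass or trees) is known to become extinct, the remaining equations become asymptotically equivalent to a one-species sub-system whose behaviour is already characterised---by Proposition~1 of Yatat et al.\ (2014) \cite{Yatat2014} for trees and by the scalar impulsive logistic theory underlying Lemma~\ref{existenceEG} for grass. I would therefore first establish two extinction lemmas and then combine them case by case.

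\textbf{Extinction of trees when $\mathcal{R}_T^0<1$.} Dropping from the $T_S$ equation the two non-positive terms $-(\gamma_S T_S+\gamma_{NS}T_{NS})(T_S+T_{NS})/K_T$ and $-\sigma_G G\,T_S$ gives the linear upper bound
\[
\dot T_S\le(\gamma_S-\mu_S-\omega_S)T_S+\gamma_{NS}T_{NS},\qquad \dot T_{NS}=\omega_S T_S-\mu_{NS}T_{NS},
\]
while the pulses only decrease $T_S$ and leave $T_{NS}$ unchanged. The planar system is cooperative, so the impulsive comparison principle of Bainov--Simeonov (1995) \cite{Bainov1995} applies. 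A direct trace-determinant computation shows that the matrix above has negative trace and positive determinant precisely when $\mathcal{R}_T^0<1$; its flow decays exponentially, and hence so do $T_S$ and $T_{NS}$.

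\textbf{Extinction of grass when $\rho_G^0<1$.} Since $T_{NS}\ge 0$, between consecutive fires $G$ is dominated by the logistic ODE $\dot g=(\gamma_G-\mu_G)g-\gamma_G g^2/K_G$ subject to the pulse multiplier $(1-\eta_G)$ at each $t_k$. This scalar impulsive logistic equation is classical: its zero solution is globally attracting whenever $\rho_G^0<1$, while a globally attracting positive periodic solution $G^*(t)$ takes over when $\rho_G^0>1$ (cf.\ Dai et al.\ (2012) \cite{Dai2012}, as already invoked in Lemma~\ref{existenceEG}). Comparison then yields $G(t)\to 0$. By the first implication in (\ref{seuil_etaG}), $\mathcal{R}_G^0<1$ already entails $\rho_G^0<1$, so this single lemma covers the grass extinction required in cases~1.1, 1.3 and~2(i).

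The theorem is now assembled. Cases~1.1 and~1.3 follow at once by applying both extinction lemmas. In case~1.2 only grass vanishes; the $(T_S,T_{NS})$ sub-system then becomes asymptotically autonomous with limit equations equal to the fire-free tree model (because $w(G)\to 0$ and the pulse factor on $T_S$ tends to $1$), whose positive equilibrium $E_T$ is globally asymptotically stable by Proposition~1 of Yatat et al.\ (2014) \cite{Yatat2014}. In case~1.4 only trees vanish and the $G$-equation asymptotically reduces to the pure impulsive logistic, whose positive periodic solution $G^*(t)$ is globally attracting under $\rho_G^0>1$. Case~2 is handled identically, using that $\mu_G=0$ forces $\mathcal{R}_G^0=+\infty$. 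The main technical obstacle is the asymptotic-autonomous reduction in cases~1.2 and~1.4, since Thieme's theorem does not transfer verbatim to impulsive systems; I would close it by a direct perturbation argument: given $\varepsilon>0$ choose $t_\star$ beyond which the vanishing variable stays below $\varepsilon$, and combine the continuous dependence of the limit flow on its inhomogeneity with the uniform exponential stability of $E_T$ (respectively of $G^*(t)$, via a Floquet estimate analogous to Lemma~\ref{stabiliteEG}) on the compact invariant set $\Omega$ to pass to the limit.
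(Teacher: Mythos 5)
Your proposal is correct and follows essentially the same route as the paper's Appendix~B: extinguish one component by an impulsive comparison/differential-inequality argument and then analyse the resulting limiting subsystem (the planar fire-free tree model, respectively the scalar impulsive logistic equation with its Floquet/Poincar\'e-map analysis). The only substantive differences are cosmetic refinements in your favour --- you use a linear cooperative bound with a trace--determinant check where the paper uses the nonlinear upper tree system plus a no-periodic-orbit argument, you unify all grass-extinction cases under the single condition $\rho_G^0<1$ via the implication $\mathcal{R}_G^0<1\Rightarrow\rho_G^0<1$, and you explicitly flag (and sketch a fix for) the asymptotically-autonomous reduction step that the paper passes over silently.
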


\begin{proof} (See Appendix B.)
\end{proof}

\subsection{ Existence of a positive and periodic tree-grass
solution} Now, we reach the position to find at least one
non-trivial positive and periodic solution of system
(\ref{fa})-(\ref{ifa}). We will use
the approach developed by Gaines and Mahwin 1977 \cite{Gaines1977}.\\
Before we give the main result of this section, we recall useful inequalities according to the thresholds defined in (\ref{seuilR20}) and (\ref{seuilR20tauetaG})
\begin{enumerate}
    \item If $\mathcal{R}_G^0<1$ then
    $\rho_G^0<1$.
    \item $\rho_G^0>1 \Longleftrightarrow (\gamma_G-\mu_G)+\displaystyle\frac{\ln(1-\eta_G)}{\tau}>0.$
\end{enumerate}

The following result holds for system $(\ref{fa})-(\ref{ifa}).$
\begin{theorem}(\textbf{Existence of a positive and periodic savanna
solution})\\\label{sol_coexist} $\bullet$ Case 1: $\mu_G>0$.\\ If
$\mathcal{R}_G^0>1$ and $\rho_G^0>1$ then system
$(\ref{fa})-(\ref{ifa})$ has at least one positive $\tau$-periodic
solution.\\
$\bullet$ Case 2: $\mu_G=0$.\\
If $\rho_G^0>1$  then system $(\ref{fa})-(\ref{ifa})$ has at least
one positive $\tau$-periodic solution.
\end{theorem}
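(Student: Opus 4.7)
The plan is to apply Mawhin's continuation theorem in coincidence degree theory (Gaines and Mawhin, 1977), suitably adapted to impulsive systems, after a logarithmic change of variables $T_S=e^{u_1}$, $T_{NS}=e^{u_2}$, $G=e^{u_3}$. Strictly positive $\tau$-periodic solutions of $(\ref{fa})-(\ref{ifa})$ are in one-to-one correspondence with $\tau$-periodic solutions in $\mathbb{R}^3$ of the transformed system
\begin{align*}
u_1' &= (\gamma_S+\gamma_{NS}e^{u_2-u_1})\Bigl(1-\frac{e^{u_1}+e^{u_2}}{K_T}\Bigr)-(\mu_S+\omega_S+\sigma_G e^{u_3}),\\
u_2' &= \omega_S e^{u_1-u_2}-\mu_{NS},\\
u_3' &= \gamma_G\Bigl(1-\frac{e^{u_3}}{K_G}\Bigr)-(\sigma_{NS}e^{u_2}+\mu_G),
\end{align*}
with additive jumps $\Delta u_1(t_k)=\ln(1-\eta_S w(e^{u_3(t_k)}))$, $\Delta u_2(t_k)=0$, $\Delta u_3(t_k)=\ln(1-\eta_G)$.

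I would then set up the functional framework in the Banach space $X$ of left-continuous $\tau$-periodic $\mathbb{R}^3$-valued functions on $[0,\tau]$ with a single jump at the impulse time, equipped with the sup norm, and in an image space $Y$ that encodes both the ODE residual and the jump. Writing $Lu=(u',\Delta u|_{t_k})$ and $N$ for the right-hand side coupled to the impulse map, the operator $L$ is Fredholm of index zero, $\ker L$ consisting of constants and $\mathrm{Im}\,L$ having codimension three, with projections given by time-averaging. Mawhin's theorem then reduces the problem to producing an a priori bounded open set $\Omega\subset X$ such that (i) no solution of $Lu=\lambda Nu$ lies on $\partial\Omega$ for $\lambda\in(0,1)$, and (ii) the Brouwer degree of $QN$ on $\Omega\cap\ker L$ is nonzero.

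The crux of the argument is the a priori bound. Integrating the $u_3$-equation over $[0,\tau]$ and using periodicity with the single jump per period gives the identity
\begin{equation*}
\int_0^\tau\Bigl(\frac{\gamma_G}{K_G}e^{u_3(t)}+\sigma_{NS}e^{u_2(t)}\Bigr)dt=(\gamma_G-\mu_G)\tau+\ln(1-\eta_G),
\end{equation*}
whose right-hand side is strictly positive precisely under the hypotheses of the theorem (this is the quantity $\frac{K_G}{\gamma_G}\ln\rho_G^0$ of Lemma \ref{lemme_integrale}, up to a multiplicative positive factor). Analogous identities for $u_1$ and $u_2$ yield $L^1$-bounds on $e^{u_i}$, and the upper bounds on $(u_i')^+$ read directly off the ODE give total variation bounds via $\int_0^\tau|u_i'|\,dt=2\int_0^\tau(u_i')^+dt-\int_0^\tau u_i'\,dt$. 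Combining a mean value theorem argument applied to the $L^1$-bounds with these total variation estimates gives uniform upper bounds on the $u_i$; the same integral identities, used as lower bounds, prevent the exponentials from collapsing to zero and thereby deliver uniform lower bounds on the $u_i$.

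The hardest step will be verifying the degree condition. For this I would compute $QN$ at constant triples $c=(c_1,c_2,c_3)\in\ker L$, reducing the problem to an algebraic system in $\mathbb{R}^3$. The hypotheses $\rho_G^0>1$ (together with $\mathcal{R}_G^0>1$ in Case 1) ensure that this algebraic system admits an isolated solution inside $\Omega\cap\ker L$ with all components strictly positive; either a direct Jacobian sign computation at that solution or a homotopy to a simpler coercive map preserving the sign of the determinant shows that $\deg(QN,\Omega\cap\ker L,0)\neq 0$. Mawhin's theorem then yields at least one $\tau$-periodic solution of the transformed system in $\Omega$, which translates to a strictly positive $\tau$-periodic solution of $(\ref{fa})-(\ref{ifa})$. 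The argument for Case 2 is identical, the only change being that $(\gamma_G-\mu_G)\tau=\gamma_G\tau$ is automatically positive so no additional hypothesis beyond $\rho_G^0>1$ is needed.
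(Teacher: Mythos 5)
Your proposal follows essentially the same route as the paper's own proof in Appendix C: the logarithmic substitution, the Gaines--Mawhin coincidence-degree framework with $L$ Fredholm of index zero and kernel the constants, the a priori bounds obtained by integrating the transformed equations over a period (with the key positivity $(\gamma_G-\mu_G)\tau+\ln(1-\eta_G)>0$ coming exactly from $\rho_G^0>1$), and the degree computation via a homotopy to a simpler map whose Jacobian determinant has sign $-1$. The plan is correct and matches the paper's argument in all essential respects.
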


\begin{proof}(See Appendix C.)
\end{proof}

\begin{remark}One should note that Theorem \ref{sol_coexist}
provides only sufficient conditions to ensure existence of at least
one positive $\tau$-periodic solution of system
(\ref{fa})-(\ref{ifa}). In other words, existence of savanna
solution relies on sufficient grass biomass production. Thus, from
an ecological point of view assumptions of Theorem \ref{sol_coexist}
can also be view as necessary conditions to have savanna solution.
\end{remark}

\begin{remark}
Note also, that the uniqueness of the positive savanna solution is
an open problem. In the rest of the paper, we will assume that we
only have one positive solution.
\end{remark}

As previously we can check the local asymptotic stability of the positive and periodic savanna solution $E_{TG}=(\tilde{T}_S(t),
\tilde{T}_{NS}(t), \tilde{G}(t))$. Defining
 \begin{equation}
    \begin{array}{lcl}
      T_S(t) & = & x(t)+\tilde{T}_{S}(t), \\
      T_{NS}(t) & = &  y(t)+\tilde{T}_{NS}(t), \\
      G(t) & = &  z(t)+\tilde{G}(t),
    \end{array}
 \end{equation}
where $x(t), y(t)$ and $z(t)$ are small perturbations and satisfy
\begin{equation}
    \left(
      \begin{array}{c}
        x(t) \\
        y(t) \\
        z(t) \\
      \end{array}
    \right)=\Phi(t)\left(
                     \begin{array}{c}
                       x(0) \\
                       y(0) \\
                       z(0) \\
                     \end{array}
                   \right),
\end{equation}
where $\Phi$ is a fundamental matrix and satisfies
\begin{equation}
    \displaystyle\frac{d\Phi(t)}{dt}=DF(\tilde{T}_{S}(t); \tilde{T}_{NS}(t);
    \tilde{G}(t))\Phi(t)=\left(
                \begin{array}{ccc}
                  a^{(1)} & a^{(2)} & -\sigma_G\tilde{T}_S(t) \\
                  \omega_S & -\mu_{NS} & 0 \\
                  0 & -\sigma_{NS}\tilde{G}(t) & a^{(3)} \\
                \end{array}
              \right)\Phi(t)
\end{equation}
with
\begin{equation}
    \begin{array}{ccl}
      a^{(1)} & = & \gamma_S\left(1-\displaystyle\frac{2\tilde{T}_S(t)+\tilde{T}_{NS}(t)}{K_T}\right)-\displaystyle\frac{\gamma_{NS}}{K_T}\tilde{T}_{NS}(t)-\sigma_G\tilde{G}(t)-\mu_S-\omega_S, \\
      a^{(2)} & = &
      \gamma_{NS}\left(1-\displaystyle\frac{\tilde{T}_S(t)+2\tilde{T}_{NS}(t)}{K_T}\right)-\displaystyle\frac{\gamma_{S}}{K_T}\tilde{T}_{S}(t),\\
      a^{(3)} & = &
      \gamma_G\left(1-\displaystyle\frac{2\tilde{G}(t)}{K_G}\right)-\sigma_{NS}\tilde{T}_{NS}(t)-\mu_G
    \end{array}
\end{equation}
 and
$\Phi(0)=Id_{\mathbb{R}^3}$. Furthermore, the resetting impulsive
condition of system (\ref{fa})-(\ref{ifa}) becomes,
\begin{equation}
                        \left(
                          \begin{array}{c}
                            x(n\tau^+) \\
                            y(n\tau^+) \\
                            z(n\tau^+) \\
                          \end{array}
                        \right)
                        =\left(
                           \begin{array}{ccc}
                             1-\eta_Sw(\tilde{G}(\tau)) & 0 & -w'(\tilde{G}(\tau))\tilde{T}_S(\tau) \\
                             0 & 1 & 0 \\
                             0 & 0 & 1-\eta_G \\
                           \end{array}
                         \right)\left(
                          \begin{array}{c}
                            x(n\tau) \\
                            y(n\tau) \\
                            z(n\tau) \\
                          \end{array}
                        \right).
\end{equation}
A monodromy matrix $\mathbf{M}$ of system $(\ref{fa})-(\ref{ifa})$,
is:
\begin{equation}\label{mt}
    \mathbf{M}=\left(
                           \begin{array}{ccc}
                             1-\eta_Sw(\tilde{G}(\tau)) & 0 & -w'(\tilde{G}(\tau))\tilde{T}_S(\tau) \\
                             0 & 1 & 0 \\
                             0 & 0 & 1-\eta_G \\
                           \end{array}
                         \right)\Phi(\tau),
\end{equation}
with
\begin{equation}\label{phit}
    \begin{array}{ccc}
      \Phi(t) & = & \exp\left(\displaystyle\int_0^tDF(\tilde{T}_{S}(s); \tilde{T}_{NS}(s);
    \tilde{G}(s))ds\right).
    \end{array}
\end{equation}
Let defined $\rho_{TG}$ such as
\begin{equation}\label{rhoTGGG}
\rho_{TG}=\max(|\lambda| ~:~\lambda\in sp(\mathbf{M})),
\end{equation}
where the matrix $\mathbf{M}$ is defined in $(\ref{mt})$. Therefore,
following the Floquet theorem (D'Onofrio (2002) \cite{Donofrio2002},
Chen et al. (2009) \cite{Chen2009})  we deduce the following results
\begin{lemma}
\begin{itemize}
    \item[$\bullet$] If $\rho_{TG}<1$ then the positive $\tau$-periodic
    solution of system $(\ref{fa})-(\ref{ifa})$ is locally
    asymptotically stable.
    \item[$\bullet$] If $\rho_{TG}=1$ then the positive $\tau$-periodic
    solution of system $(\ref{fa})-(\ref{ifa})$ is locally stable.
    \item[$\bullet$]If $\rho_{TG}>1$ then the positive $\tau$-periodic
    solution of system $(\ref{fa})-(\ref{ifa})$ is unstable.
\end{itemize}
\end{lemma}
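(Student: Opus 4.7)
The plan is to apply the Floquet theorem for impulsive periodic differential systems (as in D'Onofrio (2002) \cite{Donofrio2002} and Chen et al. (2009) \cite{Chen2009}) to the variational system constructed just before the statement. Essentially all of the setup has already been done in the excerpt: the linearization of the continuous part around $E_{TG}$, the first-order expansion of the impulsive jumps, the fundamental matrix $\Phi(t)$, and the monodromy matrix $\mathbf{M}$ in (\ref{mt})--(\ref{phit}). The remaining task is simply to verify that this linearization is correct and then read off the three stability regimes from the location of the spectrum $sp(\mathbf{M})$ relative to the unit circle.

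First I would check that the perturbation equations written down before the lemma are indeed the correct linearization. Substituting $T_S=\tilde T_S+x$, $T_{NS}=\tilde T_{NS}+y$, $G=\tilde G+z$ into (\ref{fa}) and expanding to first order in $(x,y,z)$ yields the Jacobian $DF(\tilde T_S,\tilde T_{NS},\tilde G)$ with the entries $a^{(1)}, a^{(2)}, a^{(3)}$ stated in the excerpt; correspondingly, expanding the reset map (\ref{ifa}) and using $w(\tilde G(\tau)+z)=w(\tilde G(\tau))+w'(\tilde G(\tau))z+o(z)$ (which is legitimate because $w$ is smooth) produces the jump matrix appearing in $\mathbf{M}$. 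Composition of the flow of the variational system over one period with this jump matrix gives exactly $\mathbf{M}$, so the perturbation at time $\tau$ is $\mathbf{M}$ times the perturbation at time $0$, and hence at time $n\tau$ it is $\mathbf{M}^n$ times the initial perturbation.

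Second, I would invoke the impulsive-Floquet theorem to translate the spectral picture of $\mathbf{M}$ into stability of $E_{TG}$. The Floquet multipliers are, by construction, the eigenvalues of $\mathbf{M}$. Standard theory then asserts: if all multipliers satisfy $|\lambda|<1$, i.e. $\rho_{TG}<1$, then $\mathbf{M}^n\to 0$ exponentially and $E_{TG}$ is locally asymptotically stable; if every multiplier satisfies $|\lambda|\leq 1$ with those on the unit circle being semisimple, i.e. $\rho_{TG}=1$, then the sequence $\mathbf{M}^n$ remains bounded and $E_{TG}$ is Lyapunov stable (but not necessarily attractive); if some multiplier has $|\lambda|>1$, i.e. $\rho_{TG}>1$, then iterates of $\mathbf{M}$ blow up along the corresponding eigenspace and $E_{TG}$ is unstable.

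The main delicacy lies in the borderline case $\rho_{TG}=1$: one needs the eigenvalue(s) on $|\lambda|=1$ to be semisimple so that powers of $\mathbf{M}$ stay bounded rather than grow polynomially via nontrivial Jordan blocks. This is the only non-routine point; for the other two cases the conclusion is immediate from the asymptotic behaviour of $\|\mathbf{M}^n\|$. Once this is noted, the lemma follows directly, and one can cite the impulsive Floquet results of \cite{Donofrio2002, Chen2009} to close the argument, much as was done for Lemmas \ref{stabiliteEG} and \ref{stabiliteET}.
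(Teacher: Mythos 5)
Your proposal is correct and follows essentially the same route as the paper, which likewise derives the lemma by applying the impulsive Floquet theorem of \cite{Donofrio2002,Chen2009} to the monodromy matrix $\mathbf{M}$ built from the variational system and the linearized reset map. Your added caveat about semisimplicity of unit-modulus multipliers in the borderline case $\rho_{TG}=1$ is a fair point that the paper leaves implicit, but it does not change the argument.
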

Unfortunately, expressions $(\ref{mt})$ and $(\ref{phit})$ don't
allow an explicit computation of the eigenvalues of the monodromy
matrix $\mathbf{M}$ and of the real $\rho_{TG}$. Therefore, the
stability of the positive $\tau$-periodic solution of system
$(\ref{fa})-(\ref{ifa})$ will be conjectured through numerical
computation of the threshold $\rho_{TG}$.

\subsection{Summary Table of the IFAC model qualitative analysis}
 Based on the previous studies, we deduce the summary Table
of the qualitative analysis of the IFAC model. In Table
\ref{recapitulatif},
% \textbf{G} stands for Globally Asymptotically
%Stable,
\textbf{L} stands for Locally Asymptotically Stable,
\textbf{NU} stands for Numerical Asymptotical Stability and the
empty cell denotes either the instability/non existence of the
corresponding solution or that the result does not depend on the
corresponding threshold. For reader's convenience, we recall all
thresholds defined previously:
\begin{equation}
    \begin{array}{lcl}
       \mathcal{R}_T^0 &=& \displaystyle\frac{\gamma_S\mu_{NS}+\gamma_{NS}\omega_S}{\mu_{NS}(\mu_S+\omega_S)}, \\
       &&\\
       \mathcal{R}_G^0 &=& \displaystyle\frac{\gamma_G}{\mu_G}, \qquad \mbox{when }\mu_G > 0\\
       &&\\
              \rho_G^0 &=&(1-\eta_G)\exp\left(\mu_G \left(\mathcal{R}_G^0-1\right)\tau\right), \\
              &&\\
       \mathcal{R}_T^G &=& \displaystyle\frac{\gamma_G}{\mu_G+\sigma_{NS}\bar{T}_{NS}}, \qquad \mbox{where } \bar{T}_{NS}= \displaystyle\frac{K_T\omega_S}{\mu_{NS}+\omega_S}\left(1-\displaystyle\frac{1}{\mathcal{R}_T^0}\right)\\
       &&\\
       \rho_T^G &=& (1-\eta_G)\exp\left(\gamma_G\left(1-\displaystyle\frac{1}{\mathcal{R}_T^G}\right)\tau\right),  \\
       &&\\
       \mathcal{R}_G^T & = &\displaystyle\frac{\gamma_S\mu_{NS}+\omega_S\gamma_{NS}}{\mu_{NS}(\mu_S+\omega_S)+\displaystyle\frac{\mu_{NS}\sigma_G}{\tau}\displaystyle\frac{K_G}{\gamma_G}\left(\ln(1-\eta_G)+(\gamma_G-\mu_G)\tau\right)}
    \end{array}
\end{equation}
and $\rho_T$ is defined in (\ref{rhot}).

In Table \ref{recapitulatif}, we implicitly assume that $R_G^0>1$
and $R_T^0>1$.
\begin{table}[H]
\label{recapitulatif} \centering
\vspace{0.25cm}
\begin{tabular}{|l|l|l|l|l|c|c|c|c|}
  \hline
  % after \\: \hline or \cline{col1-col2} \cline{col3-col4} ...
\multicolumn{5}{|c|}{Thresholds}&\multicolumn{3}{|c|}{Solutions}& \\
\cline{1-8}
&&&&&&&&\\
 $\mathcal{R}_T^G$ & $\rho_G^0$ & $\rho_T^G$ & $\mathcal{R}_G^T$ & $\rho_T$   &  $E_T$& $E_G$& $E_{TG}$ & Case \\
&&&&&&&&\\
  \hline
   $>1$ & $>1$ & $>1$ & $>1$ & $>1$ &  &  & NU & 1\\
\cline{5-9}
 &  &  &  & $\leq1$ &  & \textbf{L}  & NU & 2\\
\cline{4-9}
 &  &  & $<1$ &  &  & \textbf{L} & NU & 3\\
 \cline{3-9}
 &  & $\leq1$ & $>1$ & $>1$ & \textbf{L} &  & NU &4\\
\cline{5-9}
 &  & &  & $\leq1$ & \textbf{L} & \textbf{L} & NU & 5\\
\cline{4-9}
  &  &  & $<1$ &  &  \textbf{L} & \textbf{L} & NU & 6\\
\cline{2-9}
 & $<1$ &  &  &  & \textbf{L} &  & NU & 7\\
\cline{1-9}
 $<1$ & $>1$ &  & $>1$ & $>1$ &  \textbf{L} &  & NU & 8\\
\cline{5-9}
  &  &  &  & $\leq1$  & \textbf{L} & \textbf{L} & NU &9\\
\cline{4-9}
  &  &  & $<1$ &  & \textbf{L} & \textbf{L} & NU & 10\\
\cline{2-9}
 & $<1$ &  &  &  & \textbf{L} &  & NU & 11\\
%\cline{4-9}
% &  &  & $<1$ &  & \textbf{L} & \textbf{L} & NU & 12\\
  \hline
\end{tabular}
\caption{Summary table of the qualitative analysis of system
$(\ref{fa})-(\ref{ifa})$. \textbf{NU}: LAS for $E_{TG}$ needs to be estimated numerically, using }

\end{table}

\begin{remark}
In Table \ref{recapitulatif}, we only summarizes the cases where
$\mathcal{R}_G^0>1$ and $\mathcal{R}_T^0>1$, which are the
interesting cases from the ecological point of view. Moreover, a
direct computation leads
\begin{equation}\label{comparaison_de_rho}
\begin{array}{ccl}
  \rho_T^G &=& (1-\eta_G)\exp\left(\gamma_G\left(1-\displaystyle\frac{1}{\mathcal{R}_T^G}\right)\tau\right) \\
  &&\\
   & = & (1-\eta_G)\exp\left((\gamma_G-\mu_G-\sigma_{NS}\bar{T}_{NS})\tau\right) \\
   &&\\
   & = & (1-\eta_G)\exp\left(\mu_G(\mathcal{R}_G^0-1)\tau\right)\exp\left(-\sigma_{NS}\bar{T}_{NS}\tau\right) \\
   &&\\
  \rho_T^G & = & \rho_G^0\exp(-\tau\sigma_{NS}\bar{T}_{NS}).
\end{array}
\end{equation}
Therefore, according to (\ref{comparaison_de_rho}), since
$\mathcal{R}_T^0>1$ i.e. $\bar{T}_{NS}>0$ then $\rho_T^G<\rho_G^0$.
\end{remark}

In the sequel, we provide some numerical simulations in order to
illustrate our theoretical results. To achieve that goal, first we
will provide a suitable nonstandard numerical scheme which will be
helpful for the numerical approximation of the IFAC model solutions.
It is well know that standard methods (such as Runge-Kutta or Euler
methods) can sometimes present spurious behaviors which are not in
adequacy with the system properties that they aim to approximate
i.e., lead to negative solutions, exhibit numerical instabilities,
or even converge to the wrong equilibrium for certain values of the
time discretization or the model parameters (interested readers can
also see Yatat et
 al. (2014) \cite{Yatat2014}, Anguelov et al. (2012)
\cite{Anguelov2012}, (2013) \cite{Anguelov2013}, (2014)
\cite{Anguelov2014} and Dumont et al. (2010) \cite{Dumont2010},
(2012) \cite{Dumont2012} for motivations, details and explanations
about nonstandard schemes). Secondly, we will focus on three
ecological regions of the African continent that contrast in terms
of biomass production conditions, namely a semiarid, a mesic and a
humid tropical region, to discuss the IFAC outcomes with respect to
published modelling results on savanna ecosystems (Baudena et al.
(2014) \cite{Baudena2014}, February et al. (2013)
\cite{February2013}, Accatino et al. (2010) \cite{Accatino2010},
Mordelet et al. (1995) \cite{Mordelet1995}, Moustakas et al. (2013)
\cite{Moustakas2013}).

%\section{Numerical simulations and discussion}\label{NS}

\section{Nonstandard scheme, parameters ranges and ecological zones of biomass productions}\label{NS}
\subsection{A nonstandard scheme for the IFAC model} The
nonstandard numerical scheme proposed in this section is adapted
 from the nonstandard scheme proposed for the COFAC model in Yatat et
 al. (2014) \cite{Yatat2014}.

  System (\ref{fa}) is discretized as
 follows:
 \begin{equation}\label{ds}
 \left\{%
\begin{array}{rcl}
\displaystyle\frac{G^{k+1}-G^k}{\phi_G(h)} &=&
 \gamma_G\left(1-\displaystyle\frac{G^{k+1}}{K_G}\right)G^{k}-\sigma_{NS}T_{NS}^{k}G^{k+1}-\mu_GG^{k},\\
 & &\\
\displaystyle\frac{T_{NS}^{k+1}-T_{NS}^k}{\phi(h)} &=& \omega_ST_S^{k}-\mu_{NS}T_{NS}^{k+1}, \\
& &\\
\displaystyle\frac{T_S^{k+1}-T_S^k}{\phi(h)}&=&(\gamma_S-(\mu_S+\omega_S))T_{S}^{k}+\gamma_{NS}T_{NS}^{k+1}
-\displaystyle\frac{\gamma_{S}}{K_T}T_{S}^{k}(T_S^{k+1}+T_{NS}^{k+1})\\
&&\\
  & &-\displaystyle\frac{\gamma_{NS}}{K_T}T_{NS}^kT_{NS}^{k+1}-\left(\displaystyle\frac{\gamma_{NS}}{K_T}T_{NS}^k+\sigma_G
   G^k\right)T_{S}^{k+1},
  \end{array}
\right.
\end{equation}
and the impulsive event (\ref{ifa}) is discretized as
 follows:

\begin{equation}\label{ids}
 \left\{%
\begin{array}{rcl}
G^{k+1}_+& = &(1-\eta_G)G^{k+1}\\
&&\\
T_{NS+}^{k+1} &=& T_{NS}^{k+1}, \\
&&\\
T_{S+}^{k+1} &=& (1-\eta_Sw(G^{k+1}))T_{S}^{k+1},
  \end{array}
\right.
\end{equation}
where the denominator functions $\phi$ and $\phi_1$ read as
\begin{equation}\label{q}
\phi(h)=\frac{e^{Qh}-1}{Q},~~h>0,
\end{equation}
with
\begin{equation}\label{q1}
Q= \max\left(\mu_{NS}, \gamma_S-(\mu_S+\omega_S)\right).
\end{equation}
Using the fact that $\gamma_G-\mu_G=\mu_G(\mathcal{R}_G^0-1)$, we
define
\begin{equation}\label{q2}
\phi_G(h)=\left\{
\begin{array}{l}
 \displaystyle\frac{e^{\mu_G(\mathcal{R}_G^0-1)h}-1}{\mu_G(\mathcal{R}_G^0-1)},~~\mu_G>0.\\
 \\
 \displaystyle\frac{e^{\gamma_Gh}}{\gamma_G},~~\mu_G=0,~~h>0.
 \end{array}
 \right.
\end{equation}
This scheme is positively invariant and is qualitatively stable,
which means that it has the same equilibria than system (\ref{fa})-(\ref{ifa}),
and the stability/instability properties of the equilibria are
preserved, at least locally, whatever the stepsize $h>0$, \cite{Yatat2014}.

\subsection{Parameters ranges and ecological zones of biomass productions}
To provide relevant numerical simulations, one need to use
ecologically meaningful parameters ranges and values. Thus, after
extensive literature review, we found the following parameters
ranges:
\begin{table}[H]
  \centering
  \caption{Parameters values found in literature}
  \vspace{0.5cm}
  \begin{tabular}{ccl}
    \hline
    % after \\: \hline or \cline{col1-col2} \cline{col3-col4} ...
    Parameters & values & References \\
    \hline
    $f ~(1/\tau)$ & 0 -- 1 & Langevelde et al. (2003) \cite{Langevelde2003} \\
        & 0 -- 2 & Accatino et al. (2010) \cite{Accatino2010} \\
    $\gamma_G$ & $0.4^{(1)}$ -- $4.6^{(2)}$ & $^{(1)}$ Penning de Vries (1982) \cite{Penning1982} \\
    &&$^{(2)}$ Menaut et al. (1979) \cite{Menaut1979}\\
    $\gamma_S+\gamma_{NS}$ & 0.456 -- 7.2 & Breman et al. (1995) \cite{Breman1995}\\
    $\mu_{NS}$ & 0.03 -- 0.3 & Accatino et al. (2010) \cite{Accatino2010}\\
    & 0.4& Langevelde et al. (2003) \cite{Langevelde2003}\\
    $\mu_S$  & 0 -- 0.3 & Langevelde et al. (2003) \cite{Langevelde2003} \\
    $\mu_G$  & 0 -- 0.6 & Langevelde et al. (2003) \cite{Langevelde2003} \\
    $\eta_G$ & 0.1$^{(a)}$ -- 1$^{(b)}$ & $^{(a)}$ Van de Vijver (1999) \cite{Vanvijer1999}\\
    && $^{(b)}$ Accatino et al. (2010) \cite{Accatino2010}\\
    &0.2 -- 1 & Abbadie et al. (2006) \cite{Abbadie2006} \\
     $\eta_S$ & 0.02 -- 0.6 &  Accatino et al. (2010) \cite{Accatino2010}\\
     & 0.66 & Reinterpretation of Gignoux (1994) \cite{Gignoux1994}, \\
     &&Reinterpretation of Langevelde et al. (2003) \cite{Langevelde2003} \\
     $\omega_S$ & 0.05 -- 0.2 & Wakeling et al. (2011)
     \cite{Wakeling2011}\\
     \hline
  \end{tabular}
\end{table}

As stated previously, we will focus our numerical simulations on three ecological zones of the African continent
\begin{itemize}
\item[Region 1] is a semiarid area  with mean annual rainfall comprised between 300 $mm. yr^{-1}$ and 650 $mm. yr^{-1}$ where there is a low biomass production and few fire occurrence (says one fire event every ten years) if any.
\item[Region 2] is a mesic area with a mean annual rainfall that varies between 650 $mm. yr^{-1}$
 and 1100 $mm. yr^{-1}$ which is an intermediate biomass production zone  and
where we can have on average one fire event every four or five years and sometimes less.
\item[Region 3] is a high biomass production
zone, in which we can have one or two fire events per year, i.e. a humid tropical area with a mean annual rainfall between 1100 $mm. yr^{-1}$ and 1800 $mm. yr^{-1}$.
\end{itemize}
Our aim is to assess the different outcomes of the IFAC model along
with the influence of the variations of $\sigma_G$, $\sigma_{NS}$
(for which there is no direct information in the published
literature) and the fire period $\tau$.
 The
parameter ranges in each of the regions are summarized in Table
\ref{RR}.
\begin{table}[H]
  \centering
  \caption{Parameters ranges in the three ecological regions}\label{RR}
  \vspace{0.5cm}
  \begin{tabular}{l|c|c|c}
    \hline
    % after \\: \hline or \cline{col1-col2} \cline{col3-col4} ...
    Parameter & Region 1 & Region 2 & Region 3 \\
    \hline
    $\tau$ ($yr$) & $>$ 5 & 2 -- 5 & 0.5 -- 3 \\
    $K_T$ $(t.ha^{-1})$ & 30 & 80 -- 90 & 110 -- 120 \\
    $K_G$ $(t.ha^{-1})$& 0 -- 5 & 5 -- 10 & 10 -- 20 \\
    $\gamma_G$ $(yr^{-1})$& 0.4 -- 2 & 2 -- 3.5 & 3.5 -- 4.6 \\
    $\gamma_S$ $(yr^{-1})$&  0.2 -- 0.8 & 0.2 -- 1 & 1.5 -- 2.7 \\
    $\gamma_{NS}$ $(yr^{-1})$& 0.256 -- 1.2 & 1.2 -- 2.5 & 2.5 -- 4.5 \\
    %$\mu_S$ $(yr^{-1})$ & 0.08 -- 0.15 & 0.04 -- 0.08 & 0.01 -- 0.04\\
    $\mu_{NS}$ $(yr^{-1})$ & 0.1 -- 0.25 & 0.07 -- 0.1 & 0.02 -- 0.07\\
    \hline
  \end{tabular}
\end{table}
In addition, reinterpreting experiments that concern Region 1 and
Region 2 and reported in February et al. (2013) \cite{February2013}
we derived $\sigma_{G}$ (in $ha.t^{-1}.yr^{-1}$): 0.1843 -- 0.9984
for Region 1 and $\sigma_{G}$: 0.2470 -- 1.6287 for Region 2.
Moreover, several studies located under different rainfall compared
grass production under and outside a tree crown. A synthesis was
proposed by Mordelet \& Le Roux (see Abadie et al. (2006) Page 156
\cite{Abbadie2006}) that emphasized that the relative production
(within to outside) is a decreasing function along the rainfall
gradient. We re-interpreted the results as to derive reasonable
values for $\sigma_{NS}$ in each of the three regions, using the
subsequent reasoning.
\par
Assuming that the measurements were made, free of fires, in grass
stands having reached equilibrium, and letting $G_u$ and $G_o$ be
the equilibrium values under and outside crown, respectively. We can
write according to the model:
\begin{equation}
    \begin{array}{ccl}
      G_u & = & K_G\left(1- \displaystyle\frac{\mu_G+\sigma_{NS}\tilde{T}}{\gamma_G}\right)\\
      &&\\
      G_o & = & K_G\left(1- \displaystyle\frac{\mu_G}{\gamma_G}\right).
    \end{array}
\end{equation}
The ratio considered by Mordelet \& Le Roux (1995)
\cite{Mordelet1995} (see also Abadie et al. (2006) Page 156
\cite{Abbadie2006}) is:
\begin{equation}\label{ratio_deltaG}
  \delta_G=\frac{G_u}{G_o},
\end{equation}
 i.e. the ratio of grass
production under and outside a tree crown. Assuming $\mu_G = 0$ we
can simplify as:
\begin{equation}\label{ratio}
\delta_G=1-\displaystyle\frac{\sigma_{NS}\tilde{T}}{\gamma_G}\Longleftrightarrow
\sigma_{NS}=\displaystyle\frac{(1-\delta_G)\gamma_G}{\tilde{T}}.
\end{equation}
$\tilde{T}$ is the woody biomass density to be computed at the scale
of an isolated, full grown tree (having reached the maximal height
considering the local climate) in any of the three regions.\\
 We propose to relate $\tilde{T}$ to $K_T$ as
$$\tilde{T}=\displaystyle\frac{\varepsilon\times K_T}{S},$$ where $S$
is the woody cover characterizing the maximal density $K_T$ and
$\varepsilon\in]0, 1[$ is a coefficient expressing that an isolated
tree has less influence on grass production that a complete, closed
canopy stand corresponding to $K_T$.\par

We used the value of $\delta_G=1.58$ (resp. 1.25 and 0.75, 0.75) of
Mordelet \& Le Roux (1995) \cite{Mordelet1995} (see also Abadie et
al. (2006) Page 156 \cite{Abbadie2006}) that corresponds to Region 1
(semiarid region) (resp. Region 2, Region 3). Note that $\delta_G$
values above 1 express a facilitative effect of trees for grass
while values below correspond to a depressing effect. Using also the
estimated values for $K_T$ and $\gamma_G$, we deduce the ranges of
variation of $\sigma_{NS}$ (in $ha.t^{-1}.yr^{-1}$), summarized in
Table \ref{table_sigmaNS}.
\begin{table}[H]\label{table_sigmaNS}
  \centering
  \caption{Variation range of $\sigma_{NS}$ in Region 1, Region 2 and Region 3 following re-interpretation of Mordelet \& Le Roux results}%\label{}
\label{table_sigmaNS}\vspace{0.15cm}
\begin{tabular}{c|c|c|c|c|c|c|c|c|}
  %\hline
  \cline{2-9}
  % after \\: \hline or \cline{col1-col2} \cline{col3-col4} ...
   & $\delta_G$ & $\gamma_G$ & $K_T$ & $\varepsilon~(\min)$ & $\varepsilon~(\max)$ & $S$ & $\sigma_{NS}~(\min)$ & $\sigma_{NS}~(\max)$ \\
   \hline
  Region 1 & 1.58 & 0.6 & 30 & 0.4 & 0.75 & 1 & -0.029 & -0.0155 \\
  \hline
  \hline
  Region 2 & 1.25 & 2.8 & 85 & 0.2 & 0.67 & 1 & -0.0412 & -0.0123 \\
  \cline{2-2}\cline{8-9}
          & 0.75 &  &  &  &  &  & 0.0123 & 0.0412 \\
 \hline
 \hline
  Region 3 & 0.75 & 4.2 & 115 & 0.1 & 0.15 & 1 & 0.0609 & 0.0913 \\
  \hline
\end{tabular}
\end{table}
Note that the range of values for $\sigma_{NS}$ is due to a large
uncertainty on $\varepsilon$. Moreover as a straightforward
consequence of the results of the reference study (Modelet \& Le
Roux (1995) \cite{Mordelet1995}, Abadie et al. (2006)) $\sigma_{NS}$
is likely to be negative in Region 1 (shading improves grass
production in Region 1, which is also in good agreement with results
of Moustakas et al. (2013) \cite{Moustakas2013}, Belsky et al.
(1989) \cite{Belsky1989}, Weltzin \& Coughenour (1990)
\cite{Weltzin1990}).

\section{Numerical simulations and discussion}\label{discussion}
\subsection{Results for Region 1}
In semiarid areas, the main mechanisms that govern the ecological
processes include
\begin{itemize}
    \item[\textbf{(M1)}] water limitation on tree growth (Baudena et al. (2014) \cite{Baudena2014}) and on grass biomass standing crop
    ($K_G$)
    \item[\textbf{(M2)}] tree - grass competition, which has an
    especially strong competitive impact on tree seedlings (February et al. (2013)
    \cite{February2013})
    \item[\textbf{(M3)}] unfrequent fire may reduces woody cover and grass cover but
    grass biomass recover quickly after fire (Baudena et al. (2014) \cite{Baudena2014}) while low values of grass biomass limit the impact of competition on sensitive
    tree biomass.
\end{itemize}
Point \textbf{(M2)} suggests that the sensitive tree vs. grass
competition parameter $\sigma_G$ has relatively large values while
point \textbf{(M1)} along with point \textbf{(M3)} suggest that
woody cover is controlled principally by water availability and
secondarily by unfrequent fires (Sankaran et al. (2005)
\cite{Sankaran2005}). Thus the depressive effect of scattered woody
cover on grass biomass mainly results from reduced light
availability and root competition for soil water (see Walker et al.
(1981) \cite{Walker1981}). Nevertheless, somme references (Belsky et
al. (1989) \cite{Belsky1989}, Weltzin \& Coughenour (1990)
\cite{Weltzin1990}) also emphasized the facilitation role of
scattered tree on grass biomass in East African semiarid savannas.
Indeed, compared with the open situation, the highest grass
production was recorded under acacia and baobab trees (Mordelet \&
Menaut (1995) \cite{Mordelet1995}, Belsky et al. (1989)
\cite{Belsky1989}, Weltzin \& Coughenour (1990) \cite{Weltzin1990}),
which are known to have a low light interception and only induce a
slight limitation to photosynthesis while shading improves the water
balance under the canopy (Barbier et al. 2008) \cite{Barbier2008}.
This finding of Mordelet \& Menaut (1995) \cite{Mordelet1995},
Belsky et al. (1989) \cite{Belsky1989}, Weltzin \& Coughenour (1990)
\cite{Weltzin1990} can also be explained by the soil enrichment by
nitrogen fixing species, like acacias trees which results in a yield
increase. Therefore, in semiarid areas the main ecological
vegetation types that are observed depending on annual rainfall
(which only varies tree/grass ratio) are savannas (February et al.
(2013) \cite{February2013}, Baudena et al. (2014)
\cite{Baudena2014}, Accatino et al. (2010) \cite{Accatino2010}) and
sometimes forest in the sense of low dry forest, and/or thickets
(Walker et al. (1981) \cite{Walker1981} , Couteron \& Kokou (1997)
\cite{Couteron1997}). The outcome of the IFAC model in semiarid
areas is also in adequacy with this previous features. Indeed,  let
us consider the following table of parameters values
  \begin{table}[H]
  \centering
  \caption{Array of parameters' values for Region 1}\label{R1}
  \vspace{0.25cm}
  \small
  \begin{tabular}{|c|c|c|c|c|c|c|}
    \hline
    % after \\: \hline or \cline{col1-col2} \cline{col3-col4} ...
 $\gamma_S$ & $\gamma_{NS}$ & $\gamma_G$ & $\mu_{NS}$ &  $\omega_S$ & $\mu_S$ & $\eta_S$\\
     \hline\hline
0.3 & 1 & 0.6 & 0.15 & 0.1 & 0.2 & 0.5\\
    \hline\hline
       \multicolumn{7}{|c|}{ $\tau$=7,~~ $K_T$=30,~~ $K_G=2.5$}\\
    \hline
  \end{tabular}
\end{table}

For values in Table \ref{R1}, we compute
\begin{center}
\begin{tabular}{|c|c|}
  \hline
  % after \\: \hline or \cline{col1-col2} \cline{col3-col4} ...
 $\mathcal{R}_T^0$  &  $\mathcal{R}_G^0$\\
    \hline
  $3.2222$  & $2$\\
  \hline
\end{tabular}
\end{center}
and we derive figure \ref{R1fig1}.
\begin{figure}[H]
    \centering
    \resizebox{1\textwidth}{!}{\includegraphics{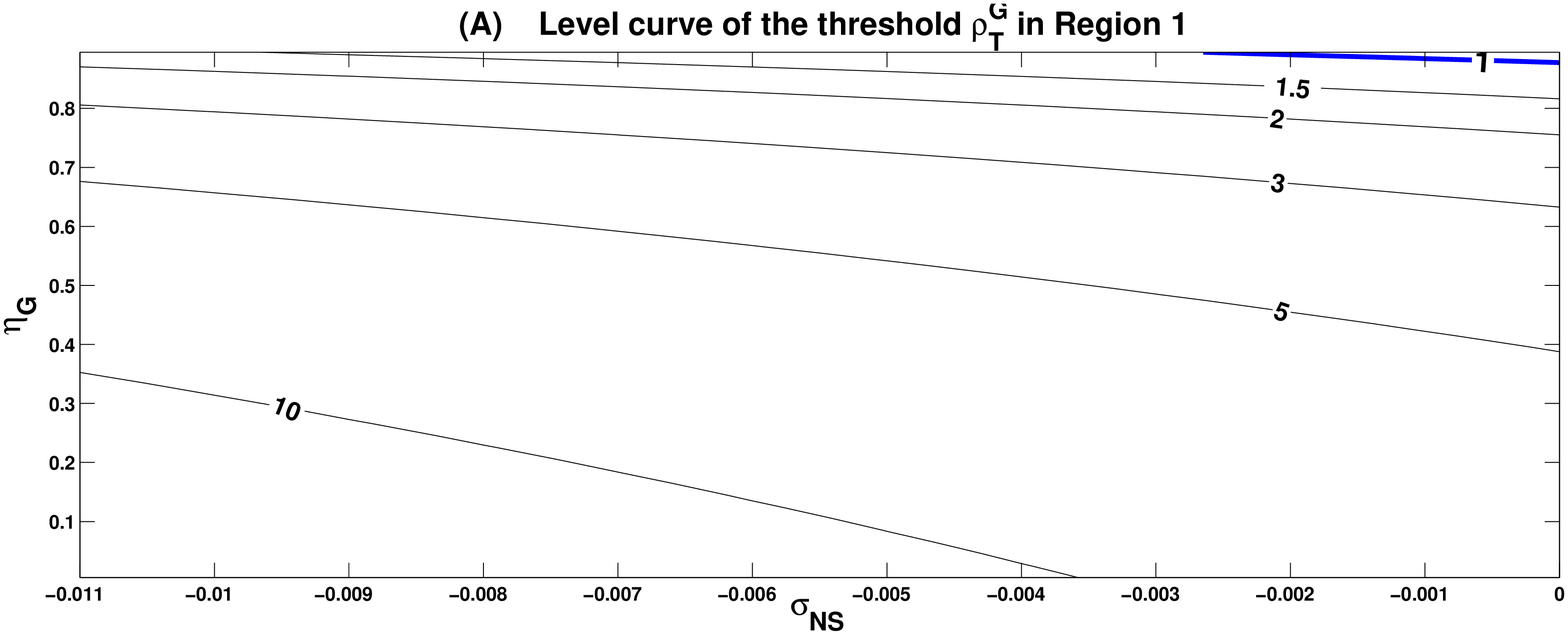}}
    \resizebox{1\textwidth}{!}{\includegraphics{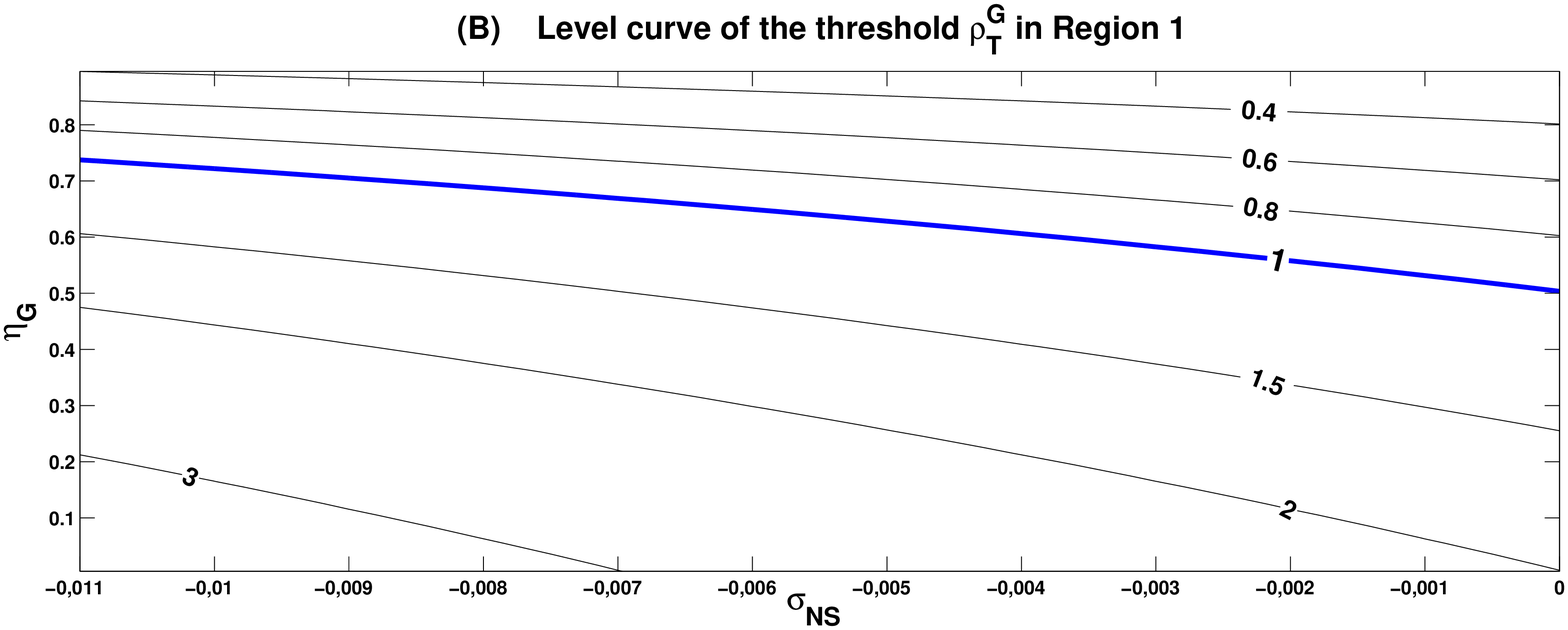}}
 \caption{Level curve of the threshold $\rho_T^G$ illustrating that as $\eta_G$, $\mu_G$ and $\sigma_{NS}$ increase, $\rho_T^G$ decreases and
 system (\ref{fa})-(\ref{ifa}) is liable to move
 from a savanna/grassland state to a forest state or to a multistability involving the forest solution.
  Recall that the forest solution is stable (resp. unstable) whenever $\rho_T^G$ is lower (resp. greater) than unity. In (A) $\mu_G=0.3$, in (B) $\mu_G$=0.5.} \label{R1fig1}
 \end{figure}

Recall that according to relation (\ref{seuil_etaG}),
$$\rho_G^0>1\Longleftrightarrow \eta_G< \left\{\begin{array}{cl}
                                                 0.8775& \mbox{for $\mu_G=0.3$} \\
                                                 0.5034& \mbox{for
                                                 $\mu_G=0.5$}.
                                               \end{array}\right.
 $$
 Moreover, together with our data estimation of
$\sigma_G$ and $\sigma_{NS}$ in Region 1, we found that
$\mathcal{R}_T^G>1$. Setting $\mu_G=0.3$, $\eta_G=0.6$, and
\begin{itemize}
    \item when $\sigma_G\in [0.92, 0.95[$, we have $\mathcal{R}_G^T>1$ and $\rho_T\leq1$
    \item when $\sigma_G\in [0.95, 0.9984[$, we have $\mathcal{R}_G^T\leq1$ and
    $\rho_T\leq1$.
\end{itemize}
 Finally, when $\mu_G=0.5$ one gets $\rho_T>1$.
Therefore, figure \ref{R1fig1} together with the previous discussion
on $\mathcal{R}_G^T$ and $\rho_T$ illustrates either case 1 to case
6 of Table \ref{recapitulatif}. Thus, when external disturbances
(such as herbivory) on grass biomass are low and $\sigma_G$ has
relatively large values, the IFAC model predicts either a stable
savanna state, a stable forest state, a stable grassland state or a
multistability involving savanna and/or forest and/or grassland (see
also case 1, case 2, case 3, case 5 and case 6 of Table
\ref{recapitulatif}). Furthermore, when external disturbances on
grass biomass become more important, the grassland solution becomes
unstable and the IFAC model predicts either a stable savanna state,
a stable forest state or a bistability involving savanna and foret
states (see also case 1 and case 4 of Table \ref{recapitulatif}).
Consequently, one can observe that the non sensitive tree vs. grass
interaction parameter $\sigma_{NS}$
 and the additional death rate of grass biomass due to external
disturbances $\mu_G$ are likely to be influential on the IFAC
outcomes in Region 1. Nevertheless, with a mean annual rainfall of
300-400 mm there is a wide array of references evidencing the
probable bistability of desert (bare soil) and thickets (in the
African Sahel) (see Couteron \& Kokou (1997) \cite{Couteron1997},
Lefever et al. (2009) \cite{Lefever2009}, Barbier et al. (2008)
\cite{Barbier2008}) or desert and grass (Namibia) (see Tschinkel
(2012) \cite{Tschinkel2012}, Fernandez-Oto et al. (2014)
\cite{Fernandez2014}).

\subsection{Results for Region 2}
In Region 2 which corresponds to a mesic area, the main mechanisms
that regulate tree-grass interactions also include mechanisms
\textbf{(M1)} and \textbf{(M2)} stated previously for semiarid
areas. In addition to \textbf{(M1)} and \textbf{(M2)}, in mesic
areas, fires are more frequent than in semiarid areas since water
availability favor grass biomass production which constitutes the
fuel for fires (we denote this new mechanism \textbf{(M4)}).
Grass-fire feedback (mechanism \textbf{M4}) together with mechanisms
\textbf{(M1)} and \textbf{(M2)}, maintain both forest and savanna
occurrences in mesic areas (Baudena et al. (2014)
\cite{Baudena2014}). Indeed, grasses benefit from the openness of
the landscape after fires, since they recover faster than trees
seedlings, thus determining a positive feedback mechanism that
enhances savanna presence. The IFAC model also predict a shift from
a forest state to a savanna state as $\sigma_{NS}$ decreases and/or
when  $\sigma_{G}$  increases, which also agree with Baudena et al.
(2014) \cite{Baudena2014} results. Therefore, savanna and forest
vegetation types clearly appear as alternatively stable states as
found by Staver et al. (2011) \cite{Staver2011}, Staver and Levin
(2012) \cite{Staver2012}, in our case depending on $\sigma_{G}$ and
$\sigma_{NS}$ variation in Region 2. \\Let us consider the following
table of parameters values
  \begin{table}[H]
  \centering
  \caption{Array of parameters' values for Region 2}\label{R2}
  \vspace{0.25cm}
  \small
  \begin{tabular}{|c|c|c|c|c|c|c|c|}
    \hline
    % after \\: \hline or \cline{col1-col2} \cline{col3-col4} ...
 $\gamma_S$ & $\gamma_{NS}$ & $\gamma_G$ & $\mu_{NS}$ &  $\omega_S$ & $\eta_S$ & $\eta_G$ & $\mu_{S}$\\
     \hline\hline
0.4 & 2 & 2.8 & 0.08 & 0.1 & 0.5 & 0.6 & 0.1\\
    \hline\hline
       \multicolumn{8}{|c|}{$K_T$=85,~~ $K_G=7$,~~$\mu_{G}=0.3$ }\\
    \hline
  \end{tabular}
\end{table}

Using parameters values in Table \ref{R2}, one has:
\begin{center}
\begin{tabular}{|c|c|}
  \hline
  % after \\: \hline or \cline{col1-col2} \cline{col3-col4} ...
$\mathcal{R}_T^0$  & $\mathcal{R}_G^0$\\
    \hline
 14.5  &  14\\
  \hline
\end{tabular}
\end{center}

and we also derive figure \ref{R2fig3} and figure \ref{R2fig1}.

\begin{figure}[H]
    \centering
    \resizebox{1\textwidth}{!}{\includegraphics{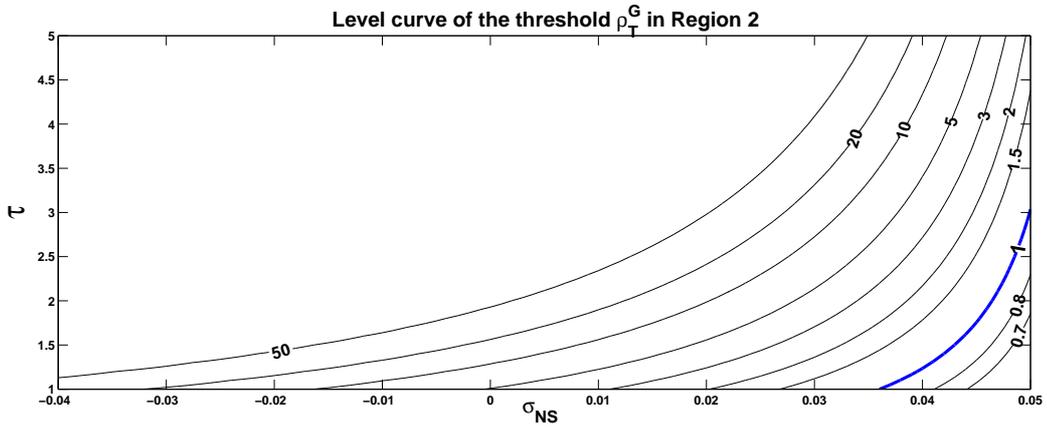}}
 \caption{Level curve of the threshold $\rho_T^G$ illustrating that system (\ref{fa})-(\ref{ifa}) is liable to move
 from a savanna/grassland state to a forest state or to a multistability involving the forest solution together with $\tau$ and $\sigma_{NS}$ variations.
  Recall that the forest solution is stable (resp. unstable) whenever $\rho_T^G$ is lower (resp. greater) than unity.} \label{R2fig3}
 \end{figure}

\begin{figure}[H]
    \centering
    \resizebox{1\textwidth}{!}{\includegraphics{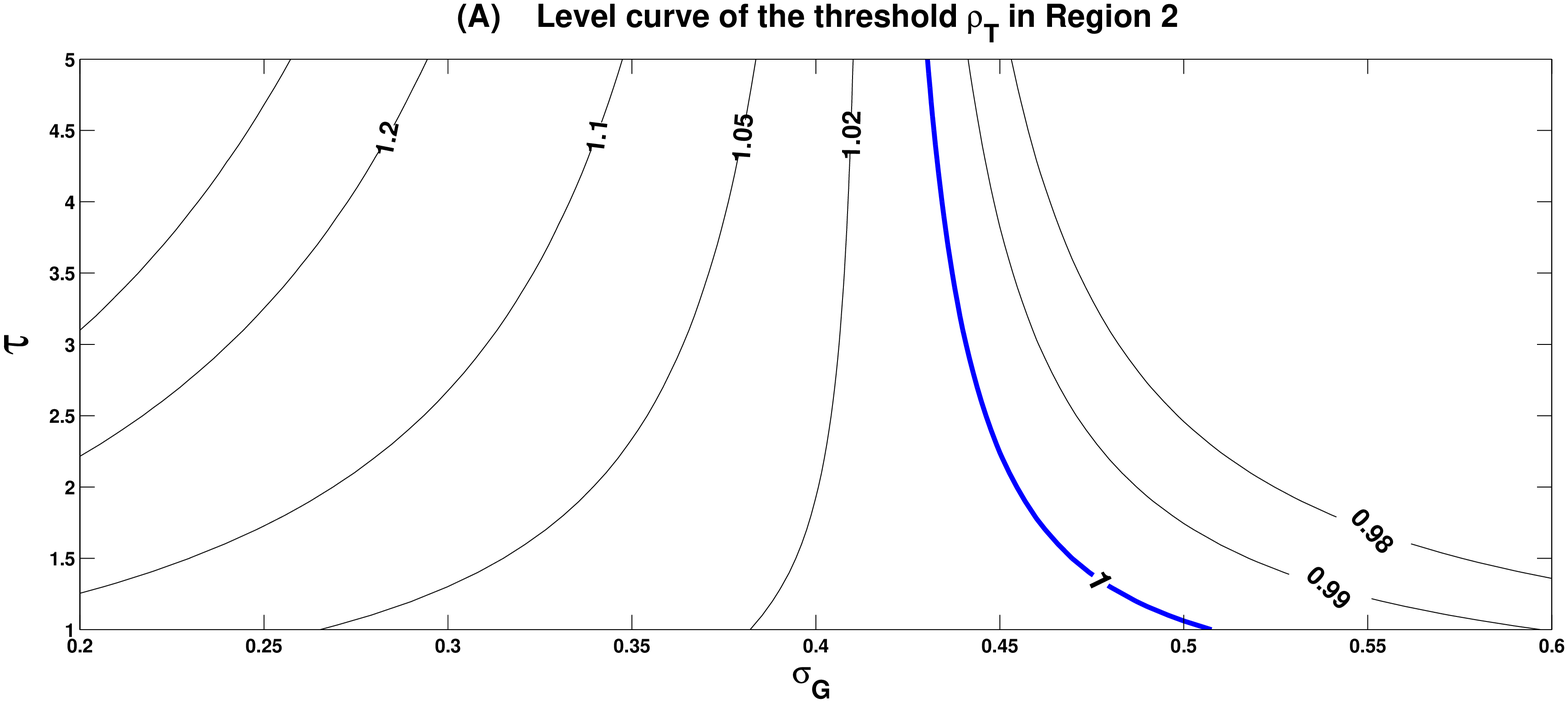}}
    \resizebox{1\textwidth}{!}{\includegraphics{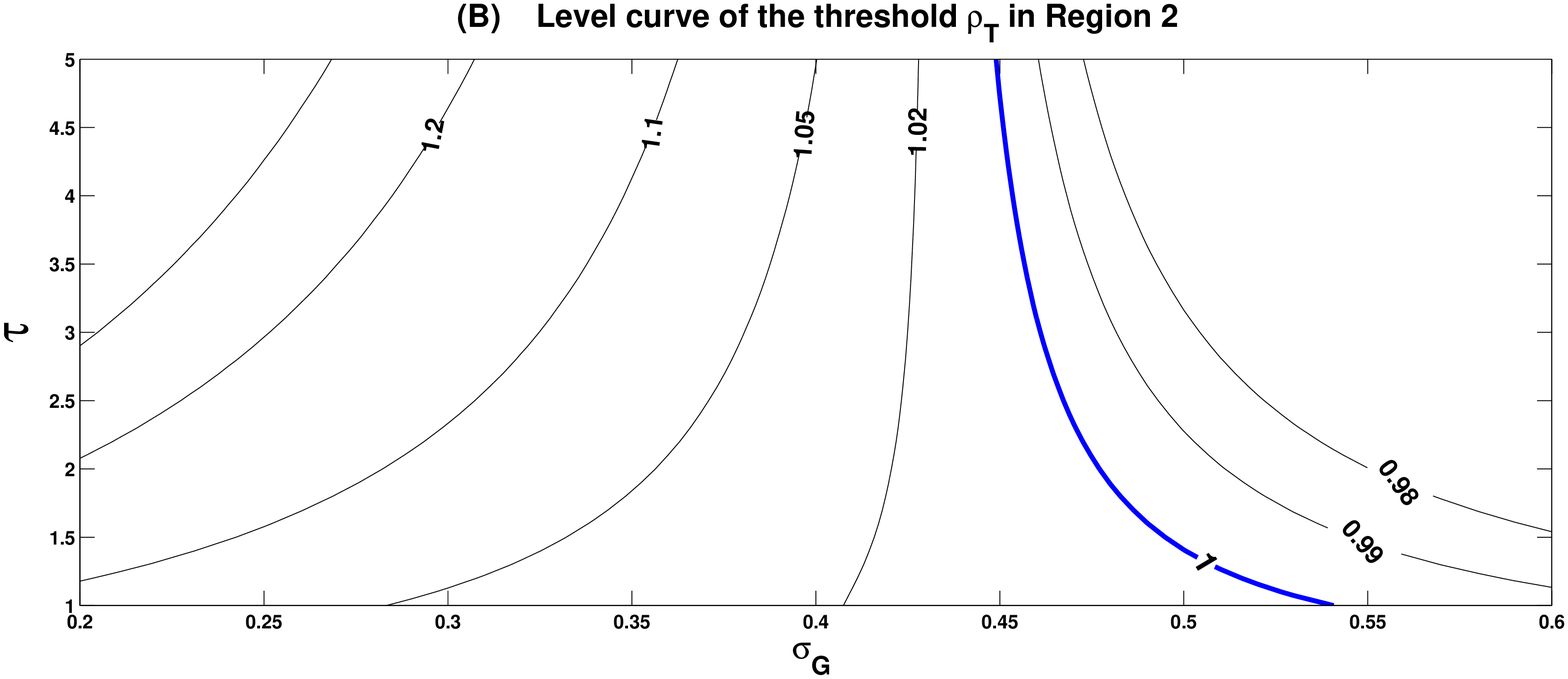}}
 \caption{Level curve of the threshold $\rho_T$ illustrating that system (\ref{fa})-(\ref{ifa}) is liable to move
 from a savanna/forest state to a grassland state or to a multistability involving the grassland solution in relation to $\tau$ and $\sigma_{G}$ variations.
  Recall that the grassland solution is stable (resp. unstable) whenever $\rho_T$ is lower (resp. greater) than unity. In (A) $\mu_G=0.2$, in (B) $\mu_G=0.3$.} \label{R2fig1}
 \end{figure}

Since for parameters values in Table \ref{R2} one has
$\mathcal{R}_G^0>1$ then,
$$\rho_G^0>1\Longleftrightarrow \tau> \left\{\begin{array}{cl}
                                                 0.3524& \mbox{for $\mu_G=0.2$} \\
                                                 0.3665& \mbox{for
                                                 $\mu_G=0.3$}.
                                               \end{array}\right.
 $$
 Moreover, for our estimation of $\sigma_{NS}$ and $\sigma_{G}$ one also has $\mathcal{R}_T^G>1$ and $\mathcal{R}_G^T>1$.
 Therefore, figure \ref{R2fig1} and figure \ref{R2fig3} illustrate, either case 1,
case 2, case 4 or case 5 of Table \ref{recapitulatif}.\par

In summary, the parameters that are likely to be influential on the
IFAC outcomes in Region 2 are the external disturbances on grass
biomass parameter $\mu_{G}$, the grass vs. sensitive tree parameter
$\sigma_G$, the non sensitive tree vs. grass interaction parameter
$\sigma_{NS}$. In addition to that previous parameters, one can also
mention the fire return time $\tau$. %One can also note that contrary
%to Region 1, depending on the previous key parameters variations the
%grassland vegetation configuration is reachable at least for
%parameters values of Table \ref{R2}.
In other words, the previous analysis reveals that in Region 2, in
addition of stability of forest, stability of savanna and
bistability of forest and savanna as in Region 1, one can observe
stability of grassland and also multistabilty situations involving
grassland solution with relatively low values of $\sigma_G$ in
comparison with Region 1.

\subsection{Results for Region 3}
Region 3 corresponds to humid tropical areas where rainfall
availability favors biomass production of both woody and grasses
components. The grass-fire feedback possibly leads to stability of
either savanna or forest in Region 3 depending on fire return time.
Indeed, grass, particularly abundant in these wet areas, becomes an
extremely good fuel in the dry season, which promotes fire
occurrence and increases fire intensity and impact (Baudena et al.
(2014) \cite{Baudena2014}, Higgins et al. (2008)
\cite{Higgins2008}). When the fire return time is large, the trees
have the time to grow above the flame zone and to reach canopy
closure and then outcompete grasses. Therefore, relatively large
return time favor forest state in Region 3 (Staver and Levin (2012)
\cite{Staver2012}). Moreover, if the fire return time is small then
trees don't have the time to reach canopy closure and therefore let
grasses  which regrow quickly in the open space after fires
 form either a stable savanna state or a stable
grassland state. We illustrate hereafter that these features are
also predicted by the IFAC model.

Consider
  \begin{table}[H]
  \centering
  \caption{Array of parameters' values for Region 3}\label{R3}
  \small
  \begin{tabular}{|c|c|c|c|c|c|c|c|c|}
    \hline
    % after \\: \hline or \cline{col1-col2} \cline{col3-col4} ...
 $\gamma_S$ & $\gamma_{NS}$ & $\gamma_G$  & $\mu_{NS}$ &  $\omega_S$ & $\eta_S$ & $\eta_G$ & $\mu_{S}$ & $\mu_{G}$\\
     \hline\hline
2 & 3 & 4.2  & 0.06 & 0.1 & 0.5 & 0.6 & 0.1 & 0.2\\
    \hline\hline
       \multicolumn{9}{|c|}{ $K_T$=115,~~ $K_G=15$}\\
    \hline
  \end{tabular}
\end{table}
In this section, we will refer to a particular area, namely the
Lamto region in Ivory Coast (see Menaut et al. (1979)
\cite{Menaut1979}, Mordelet \& Menaut (1995) \cite{Mordelet1995}).
Thanks to Abadie et al. (2006) Page 156 \cite{Abbadie2006} (see also
Mordelet \& Menaut (1995) \cite{Mordelet1995}) data report of grass
biomass in the canopy and open situations in Lamto and by
reinterpreting their results we have derived the range
$0.0609\leq\sigma_{NS}\leq0.0913$.\par

Using parameters values in Table \ref{R3} one has:
\begin{center}
\begin{tabular}{|c|c|}
  \hline
  % after \\: \hline or \cline{col1-col2} \cline{col3-col4} ...
 $\mathcal{R}_T^0$  & $\mathcal{R}_G^0$ \\
    \hline
  35  & 21 \\
  \hline
\end{tabular}
\end{center}

We also derived the following figure \ref{R3fig1}.
\begin{figure}[H]
    \centering
    \resizebox{1\textwidth}{!}{\includegraphics{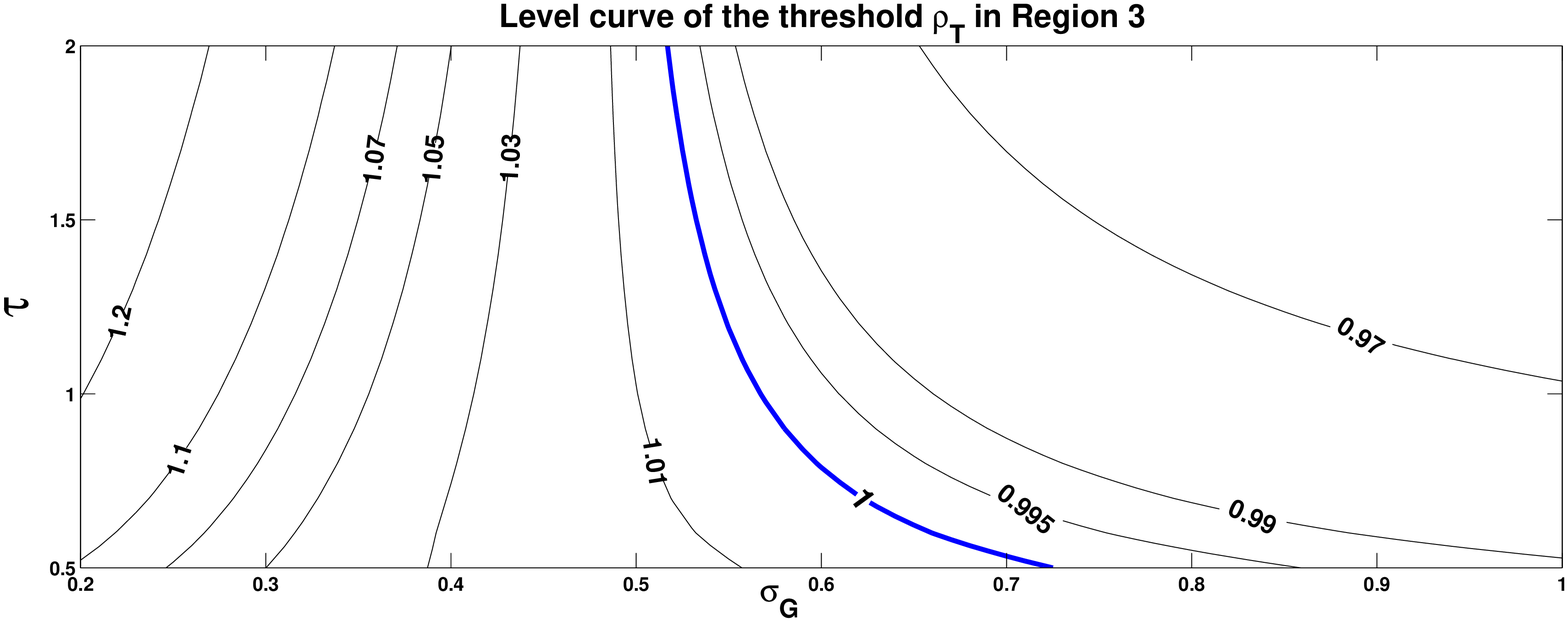}}
 \caption{Level curve of the threshold $\rho_T$ illustrating that system (\ref{fa})-(\ref{ifa}) is liable to move
 from a savanna/forest state to a grassland state or to a multistability involving the grassland solution in relation to $\tau$ and $\sigma_{G}$ variations.
  Recall that the grassland solution is stable (resp. unstable) whenever $\rho_T$ is lower (resp. greater) than unity.} \label{R3fig1}
\end{figure}

 Since for parameters values in Table
\ref{R3} and according to our estimation of $\sigma_{NS}$ one has
\begin{itemize}
    \item[$\bullet$] $\mathcal{R}_T^G<1$, $\rho_T^G<1$,
    \item[$\bullet$] $\mathcal{R}_G^0>1$ then,
$$\rho_G^0>1\Longleftrightarrow \tau>0.2291.$$
Values of $\tau$ are not expected  to be under 0.5 (i.e. 2 fires per
year), a minimum which corresponds to sub-equatorial climates with
two dry seasons. We therefore consider that this condition is always
fulfilled.
\end{itemize}
 Therefore, one can
deduce that figure \ref{R3fig1} illustrates either case 8, 9 or case
10 of Table \ref{recapitulatif}.\par

The previous analysis highlighted the importance of the grass vs.
sensitive tree competition parameter, $\sigma_G$, the  non sensitive
tree vs. grass  competition parameter, $\sigma_{NS}$, and the fire
return time, $\tau$, in controlling the outcomes of the IFAC model.
Comparing to results of Region 2 (in terms of having the forest, the
grassland or the savanna as reachable solution), one note that the
IFAC model fairly has the same outcomes in Region 3 as in Region 2.
Figure \ref{R3fig1} further show that for low values of $\sigma_G$,
say less than 0.6, $\tau$ seems to have very limited influence. It
is stronger for large values for which increases in $\tau$ make
$\rho_T$ decreases under 1 and therefore destabilize the grassland
solution.

\par

In summary, in Region 3, we observed that the fire return time along
with the grass vs. sensitive tree competition parameter $\sigma_G$
and the non sensitive tree vs. grass competition parameter
$\sigma_{NS}$ strongly influence the outcome of the IFAC model.
Indeed, depending on these parameters variations and values, the
IFAC can converge either to a grassland state, to a savanna state,
to a forest state or to a multistability involving forest state and
either grassland or savanna while environmental conditions in this
Region would systematically allow forests in the absence of fire
(Staver and Levin (2012) \cite{Staver2012}). Therefore to favor a
forest state in Region 3, one could implement policies in order to
have relatively large fire return time (says a fire frequency, f,
lower than one fire per year: $f<1$) and vice versa if tracts of
savanna are to be kept against forest encroachment as habitats of
large grazing mammals.

\section{Conclusion }\label{conclusion}
In this work, we presented and analyzed a new kind of mathematical
model for tree-grass interactions in savanna ecosystems either fire
prone or not. It is an extension of a continuous-time model, called
the COFAC model, studied in Yatat et al. (2014) \cite{Yatat2014}.
The model presented here, that we call the IFAC model, is based on a
system featuring impulsive differential equations and thereby aims
to acknowledge the discrete nature of fire events. The analytical
study of the IFAC reveals a desert equilibrium, a forest equilibrium
and two periodic solutions: the grassland periodic solution and the
savanna periodic solution. The analytical study also reveals seven
ecological thresholds ($\mathcal{R}_T^0$, $\mathcal{R}_G^0$,
$\rho_G^0$, $\mathcal{R}_G^T$, $\mathcal{R}_T^G$, $\rho_T^G$,
$\rho_T$). These thresholds define in parameter space regions of
monostability, bistability also found with the models of Accatino et
al. (2010) \cite{Accatino2010}, De Michele et al. (2011)
\cite{Demichele2011}, Yatat et al. (2014) \cite{Yatat2014}. They
also define regions of tristability as in Yatat et al. (2014)
\cite{Yatat2014} with respect to the equilibria (desert and forest)
and periodic solutions (grassland and savanna). The specificity of
IFAC is to also present periodic behaviors which depict fluctuations
in woody and/or grassy biomass and cannot be yielded by fully
continuous time models. Therefore, in case of transition from a
vegetation type (forest, savanna or grassland) to another, the
change is done progressively which is more ecologically meaningful
than the abrupt changes observed for fires-continuous models such as
in Accatino et al. (2010) \cite{Accatino2010}, De Michele et al.
(2011) \cite{Demichele2011} and Yatat et al. (2014)
\cite{Yatat2014}. Such abrupt changes have been criticized as
unrealistic by Accatino et al. (2013) \cite{Accatino2013} (see also
Beckage et al. (2011) \cite{Beckage2011}) who advocated stochastic
models in lieu of continuous time formulation of fire impact on
vegetation. The present approach however demonstrates that a more
realistic modeling of fire can be introduced within the framework of
continuous time models while keeping the potential for analytical
exploration of the main outcomes of the model. Something which is
not possible with the aforementioned fully stochastic models.

\par

As in Yatat et al. (2014) \cite{Yatat2014}, we found that the
competition parameters $\sigma_G$ which expresses the asymmetric
competition exerted by grasses on sensitive trees (shading and soil
resource preemption) and $\sigma_{NS}$ that expresses the asymmetric
competition of non sensitive trees on grasses (shading and soil
resource preemption) are bifurcation parameters of the IFAC model
along with fire frequency (which strongly influence the convergence
outcomes of the IFAC model). The analytical  study of the IFAC model
also reveals three particular values $\tau^\star$,
$\sigma_{NS}^\star$ and $\sigma_G^\star$ (see relation (\ref{TGF}),
Page \pageref{TGF}) that delimit regions of stability/instability or
forest and grassland solutions in relation to $\tau$, $\sigma_G$ and
$\sigma_{NS}$ respectively
 (see relations (\ref{SG}), Page \pageref{SG} and (\ref{SF}), Page
\pageref{SF}). Moreover, considering three ecological biomass
production zones indexed by fires frequency and by carrying
capacities of both trees and grasses biomass, allows us to point out
several scenarios for IFAC convergence that depend on $\sigma_G$,
$\sigma_{NS}$ and $\tau$ values and variations. These outcomes of
the IFAC model are qualitatively in agreement with results of
Baudena et al. (2014) \cite{Baudena2014}, Staver et al. (2011)
\cite{Staver2011}, February et al. (2013) \cite{February2013},
Accatino et al. (2010) \cite{Accatino2010}, Couteron \& Kokou (1997)
\cite{Couteron1997}, Staver \& Levin (2012) \cite{Staver2012}.
Distinguishing three ecological zones allowed us to verify in which
contexts the possible bifurcation parameters are actually
influential or not. This analysis highlighted the pervasiveness of
$\sigma_{NS}$ (i.e. the depressive or facilitation effect of
grown-up trees on grasses) in all the three zones. It also
emphasized the influence of $\sigma_{G}$ (i.e. depressive effect of
grasses on small trees) in the two zones (2 and 3) with sufficient
rainfall to allow medium to high grass production. In these two
zones, and especially in zone 3, the fire return time ($\tau$)
appeared also influential. As already mentioned by Yatat et al.
(2014) \cite{Yatat2014}, and verified here for the IFAC model, the
competition parameters $\sigma_G$ and $\sigma_{NS}$ which embody
direct tree-grass interactions deserve an increased interest and
should be the focus of adhoc observations and experiments as to
better assess their ranges of variation in the different ecological
regions.
\par
Although the IFAC model presented in this work and the COFAC model
presented in Yatat et al. (2014) \cite{Yatat2014} qualitatively
display strong similarities, the IFAC model is richer. Indeed
modelling fire events as pulse phenomena leads to a relaxation of
stability conditions of both forest and grassland solutions and it
increases parameters ranges for which bistability situations
involving forest, grassland and savanna can occur. This particular
property of the IFAC model may explain, along with bifurcation
parameters and its periodic outcomes, many changes in tree-grass
interactions in fire-prone ecosystems. Thanks to this particular
property, the IFAC model, which moreover displays periodic outcomes
and bifurcations according to well-identified parameters, is able to
account for many dynamical scenarios observed in savanna-like
ecosystems from the fringes of the desert to the boundary of the wet
forest.

\vspace{0.5cm}

\noindent \textbf{Acknowledgements}\\
The first author is grateful to the French governement and the French Embassy in Yaound\'e (Cameroon) for their support (SCAC fund) during the preparation of this manuscript.

\appendix

\section*{Appendix A: Proof of Lemma \ref{stabiliteEG}}
 Defining
 \begin{equation}
    \begin{array}{lcl}
      T_S(t) & = & x(t), \\
      T_{NS}(t) & = &  y(t), \\
      G(t) & = & G^*(t) + z(t),
    \end{array}
 \end{equation}
where $x(t), y(t)$ and $z(t)$ are small perturbations. Every
solution of the linearized equations can be written as
\begin{equation}
    \left(
      \begin{array}{c}
        x(t) \\
        y(t) \\
        z(t) \\
      \end{array}
    \right)=\Phi(t)\left(
                     \begin{array}{c}
                       x(0) \\
                       y(0) \\
                       z(0) \\
                     \end{array}
                   \right).
\end{equation}
Here $\Phi$ is a fundamental matrix and satisfies,
\begin{equation}
    \begin{array}{ccl}
       \displaystyle\frac{d\Phi(t)}{dt}&=&DF(0; 0; G^*(t))\Phi(t)\\
       & = & \left(
                                                   \begin{array}{ccc}
                                                     \gamma_S-(\mu_S+\omega_S+\sigma_GG^*(t)) & \gamma_{NS} & 0 \\
                                                     \omega_S & -\mu_{NS} & 0 \\
                                                     0 & -\sigma_{NS}G^*(t) & \gamma_G-2\displaystyle\frac{\gamma_G}{K_G}G^*(t)-\mu_G \\
                                                   \end{array}
                                                 \right)
    \Phi(t)
    \end{array}
\end{equation}

 and
$\Phi(0)=Id_{\mathbb{R}^3}$. Moreover the resetting impulsive
condition of system (\ref{fa})-(\ref{ifa}) becomes,
\begin{equation}
                        \left(
                          \begin{array}{c}
                            x(n\tau^+) \\
                            y(n\tau^+) \\
                            z(n\tau^+) \\
                          \end{array}
                        \right)
                        =\left(
                           \begin{array}{ccc}
                             1-\eta_Sw(G^*(\tau)) & 0 & 0 \\
                             0 & 1 & 0 \\
                             0 & 0 & 1-\eta_G \\
                           \end{array}
                         \right)\left(
                          \begin{array}{c}
                            x(n\tau) \\
                            y(n\tau) \\
                            z(n\tau) \\
                          \end{array}
                        \right).
\end{equation}
A monodromy matrix $\mathbf{M}$ of system $(\ref{fa})-(\ref{ifa})$,
is:
\begin{equation}\label{mg}
    \mathbf{M}=\left(
                           \begin{array}{ccc}
                             1-\eta_Sw(G^*(\tau)) & 0 & 0 \\
                             0 & 1 & 0 \\
                             0 & 0 & 1-\eta_G \\
                           \end{array}
                         \right)\Phi(\tau),
\end{equation}
with
\begin{equation}\label{phig}
    \begin{array}{ccc}
      \Phi(t) & = & \exp\left(\int_0^tDF(0; 0;
      G^*(s))ds\right).
    \end{array}
\end{equation}
Moreover using Lemma \ref{lemme_integrale}, a direct computations
leads $$
  \int_0^\tau DF(0; 0;
      G^*(s))ds  =\left(
                    \begin{array}{ccc}
                      DF^{(1)} & DF^{(2)} & 0 \\
                      DF^{(3)} & DF^{(4)} & 0 \\
                      0 & DF^{(5)} & DF^{(6)} \\
                    \end{array}
                  \right),
      $$
where
\begin{equation}
\begin{array}{ccl}
  %DF^{(1)} & = & (\gamma_S-(\mu_S+\omega_S))\tau-\displaystyle\frac{\sigma_GK_G}{\gamma_G}\left\{\ln(1-\eta_G)+\gamma_G\left(1-\displaystyle\frac{1}{\mathcal{R}_2^0}\right)\tau\right\}, \\
  DF^{(1)} & = &
  (\gamma_S-(\mu_S+\omega_S))\tau-\sigma_G\displaystyle\int_{0}^{\tau}G^*(t)dt,\\
  DF^{(2)} & = & \gamma_{NS}\tau, \\
  DF^{(3)} & = & \omega_S\tau, \\
  DF^{(4)} & = & -\mu_{NS}\tau, \\
  DF^{(5)} & = & -\sigma_{NS}\displaystyle\int_{0}^{\tau}G^*(t)dt,\\%-\displaystyle\frac{\sigma_{NS}K_G}{\gamma_G}\left\{\ln(1-\eta_G)+\gamma_G\left(1-\displaystyle\frac{1}{\mathcal{R}_2^0}\right)\tau\right\}, \\
  DF^{(6)} & = &-\mu_G(\mathcal{R}_G^0-1)\tau-2\ln(1-\eta_G).
\end{array}
\end{equation}
Consider the sub-matrix $\mathbf{B}$ defined as follow:
\begin{equation}
\mathbf{B}=\left(
    \begin{array}{cc}
      DF^{(1)} & DF^{(2)} \\
      DF^{(3)} & DF^{(4)} \\
    \end{array}
  \right).
\end{equation}
Recall that eigenvalues of the matrix $\mathbf{B}$ are root of the
quadratic equation
$$\lambda^2- trace (\mathbf{B}) \lambda+ \det(\mathbf{B})=0$$

and to characterize real part of eigenvalues of matrix $\mathbf{B}$,
following Routh-Hurwitz criterium (see Section 1.3.5 Page 72 of
Augier et al. (2010) \cite{Augier2010}), one need only to study the
sign of $trace (\mathbf{B})$ and $\det(\mathbf{B})$.
 Let
\begin{equation}
\begin{array}{ccl}
 \mathcal{A}= tr(\mathbf{B}) & = & \gamma_S\tau\left(1-\displaystyle\frac{1}{\mathcal{R}}\right),
\end{array}
\end{equation}
where
$$\mathcal{R}=\displaystyle\frac{\gamma_S}{\mu_S+\omega_S+\mu_{NS}+\sigma_GG_{int}}>0.$$
Thus, if $\mathcal{R}<1$ then $\mathcal{A}<0$.\\
Moreover, let
\begin{equation}
\begin{array}{ccl}
  \mathcal{B}=\det(\mathbf{B}) & = & \tau\mu_{NS}\left((\mu_S+\omega_S)\tau+\sigma_GG_{int}\right)\left(1-\mathcal{R}_G^T\right),
\end{array}
\end{equation}
where
$$\mathcal{R}_G^T=\displaystyle\frac{\gamma_S\mu_{NS}+\omega_S\gamma_{NS}}{\mu_{NS}(\mu_S+\omega_S)+\sigma_G\mu_{NS}G_{int}}>0.$$
Thus, if $\mathcal{R}_G^T<1$ then $\mathcal{B}>0$.\\
Moreover, one also has $\mathcal{R}<\mathcal{R}_G^T.$\\
 Therefore, if
$\mathcal{R}_G^T<1$ then $s(\mathbf{B})<0$, where $s$ denotes the
stability modulus (i.e. the maximum of the real part of eigenvalues).\\
From expressions $(\ref{mg})$ and $(\ref{phig})$ we deduced that
eigenvalues $\xi_1$, $\xi_2$ and $\xi_3$ of the monodromy matrix
$\mathbf{M}$ are
\begin{equation}
\begin{array}{ccl}
  \xi_1 & = & (1-\eta_Sw(G^*(\tau)))e^{\lambda_1}, \\
  \xi_2 & = & e^{\lambda_2}, \\
  \xi_3 & = &\displaystyle\frac{e^{-\mu_G(\mathcal{R}_G^0-1)\tau}}{1-\eta_G},
\end{array}
\end{equation}
where $\lambda_1$, $\lambda_2~\in~sp(\mathbf{B})$.\\
 Since
$0<1-\eta_Sw(G^*(\tau))\leq1$, if $\mathcal{R}_G^T<1$, then $0\leq
\xi_1<1$ and $0< \xi_2<1.$\\ Moreover, since $\rho^0_G>1$ then
$\xi_3<1$. Indeed,
 $$\xi_3<1 \Leftrightarrow
e^{-\mu_G(\mathcal{R}_G^0-1)\tau}<1-\eta_G \Leftrightarrow
1<(1-\eta_G)e^{\mu_G(\mathcal{R}_G^0-1)\tau}\Leftrightarrow
\rho^0_G>1.$$ Finally we deduce that the grassland periodic solution
$E_G=(0; 0; G^*(t))$ is locally asymptotically stable if
$\mathcal{R}_G^T<1$ or $(\mathcal{R}_G^T>1$ and $\rho_T<1)$, is
locally stable if $(\mathcal{R}_G^T>1$ and $\rho_T=1)$ and is
unstable if $(\mathcal{R}_G^T>1$ and $\rho_T>1)$. This ends the
proof.

\section*{Appendix B: Proof of Theorem \ref{prop}}
$\star$ Case 1: $\mu_G>0$.\\
Solution $G$ of system
(\ref{fa})-(\ref{ifa}) satisfy
\begin{equation}
    \begin{array}{ccc}
      G'(t) & \leq & \gamma_G\left(1-\displaystyle\frac{1}{\mathcal{R}_G^0}\right)G(t) \\
      G(t_k^+) & = & (1-\eta_G)G(t_k).
    \end{array}
\end{equation}
From Lemma 1.3 page  15 in \cite{Bainov1995} we deduce that
$$G(t)\leq
    G(0)\left(\prod\limits_{0\leq
    t_k<t}(1-\eta_G)\right)\exp\left(\gamma_G\left(1-\displaystyle\frac{1}{\mathcal{R}_G^0}\right)t\right).$$Thus,
    for $\mathcal{R}_G^0<1$ we have $\lim\limits_{t\rightarrow
    +\infty}G(t)=0$ and solutions $T_S$ and $T_{NS}$ of system
    (\ref{fa})-(\ref{ifa}) satisfy
    \begin{equation}\label{tstns}
    \left\{
        \begin{array}{ccl}
          T_S' & = & (\gamma_ST_S+\gamma_{NS}T_{NS})\left(1-\displaystyle\frac{T_S+T_{NS}}{K_T}\right)-T_S(\mu_S+\omega_S), \\
          T'_{NS} & = & \omega_ST_S-\mu_{NS}T_{NS}.
        \end{array}
        \right.
    \end{equation}
System (\ref{tstns}) does not admit periodic solution (see Appendix
B in Yatat et al. (2014) \cite{Yatat2014}), thus using the jacobian
matrix of system (\ref{tstns}) we deduce that
\begin{itemize}
    \item if $\mathcal{R}_T^0<1$ then, $(T_S, T_{NS})\rightarrow
    (0,0)$,
    \item if
    $\mathcal{R}_T^0>1$ then, $(T_S, T_{NS})\rightarrow (\bar{T}_S,\bar{T}_{NS})$, where $(\bar{T}_S,\bar{T}_{NS})$ are given
    in (\ref{ts}).
\end{itemize}
At the end, we deduce that if $\mathcal{R}_T^0<1$ and
$\mathcal{R}_G^0<1$ then, the desert equilibrium $E_0$ is GAS i.e.,
point 1 of Theorem \ref{prop} holds. The forest equilibrium $E_T$ is
GAS whenever $\mathcal{R}_T^0>1$ and $\mathcal{R}_G^0<1$ i.e., point
2 of Theorem \ref{prop} holds.\par Now suppose that
$\mathcal{R}_T^0<1$ and $\mathcal{R}_G^0>1$. Solutions $T_S$ and
$T_{NS}$ of system (\ref{fa})-(\ref{ifa}) satisfy
\begin{equation}%\label{faco}
 \left\{%
\begin{array}{lclcr}
  \displaystyle\frac{dT_S}{dt} &\leq& (\gamma_ST_S+\gamma_{NS}T_{NS})\left(1-\displaystyle\frac{T_S+T_{NS}}{K_T}\right)-T_{S}(\mu_S+\omega_S),& & \\
  & & & & \\
  \displaystyle\frac{dT_{NS}}{dt} &\leq& \omega_ST_S-\mu_{NS}T_{NS},& & t\neq t_k\\
  \end{array}
\right.
\end{equation}
\begin{equation}%\label{ifaco}
 \left\{%
\begin{array}{lclcr}
  T_S(t_k^+) &\leq& T_S(t_k),& & \\
  T_{NS}(t_k^+) &\leq& T_{NS}(t_k),&t=t_k & t_{k+1}=t_k+\tau.
\end{array}
\right.
\end{equation}
Let consider the upper system
\begin{equation}%\label{facomp}
 \left\{%
\begin{array}{lcl}
  \displaystyle\frac{du}{dt} &=& (\gamma_Su+\gamma_{NS}v)\left(1-\displaystyle\frac{u+v}{K_T}\right)-u(\mu_S+\omega_S),\\
  & & \\
  \displaystyle\frac{dv}{dt} &=& \omega_Su-\mu_{NS}v,\\
  \end{array}
\right.
\end{equation}
Since $\mathcal{R}_T^0<1$, $(u(t),v(t))\rightarrow (0,0)$. Thus
$(T_S(t), T_{NS}(t))\rightarrow (0,0)$. Furthermore, solution $G$ of
system (\ref{fa})-(\ref{ifa}) admits as limiting system
\begin{equation}\label{herbe1}
 \left\{%
\begin{array}{lclc}
  \displaystyle\frac{dG}{dt} &=& \gamma_G\left(1-\displaystyle\frac{G}{K_G}\right)G-\mu_GG& t\neq t_k,\\
  G(t_k^+)&=&(1-\eta_G)G(t_k) & t=t_k.
\end{array}
\right.
\end{equation}
System (\ref{herbe1}) admits at most two solutions: the trivial
solution, 0, which always exists and the periodic solution $G^*(t)$
which is ecologically meaningful if $\rho_G^0>1$
 where
$G^*(t)$ is given by (\ref{getoile}). Now we turn to check stability
results of solutions of system (\ref{herbe1}) through small
perturbations approach and Floquet's theory.
\begin{itemize}
    \item Setting $G(t)=x(t)$ where $x$ is a small perturbation and
    verify $x(t)=\phi(t)x_0$, where $\phi$ verify $$\phi'(t)=\mu_G\left(\mathcal{R}_G^0-1\right)\phi(t)$$
    and $\phi(0)=1.$ The resulting impulsive condition becomes
    $$x(nT^+)=(1-\eta_G)x(nT).$$ Following the Floquet's theory, the
    zero equilibrium is locally asymptotically stable if $$\lambda_0=(1-\eta_G)e^{\mu_G\left(\mathcal{R}_G^0-1\right)\tau}<1.$$
    Since\\ $\mathcal{R}_G^0>1$,
    $\lambda_0<1$
    if and only if $(1-\eta_G)e^{\mu_G\left(\mathcal{R}_G^0-1\right)\tau}<1$,
    i.e.
     $\rho_G^0<1.$\\
     Moreover,
    for $\rho_G^0<1$ the
    positive solution $G^*(t)$ is undefined then the desert
    equilibrium is globally asymptotically stable. Finally, we deduce that the desert solution $(0,0,0)$ is globally asymptotically stable whenever
     $\mathcal{R}_T^0<1$, $\mathcal{R}_G^0>1$ and $\rho_G^0<1$. Point 3 of
     Theorem
    \ref{prop} holds.
    \item Now, setting $G(t)=G^*(t)+x(t)$ where $x$ is a small perturbation and
    verify $x(t)=\phi(t)x_0$, where $\phi$ verify $$\phi'(t)=\left[\mu_G\left(\mathcal{R}_G^0-1\right)-\displaystyle\frac{2\gamma_G}{K_G}G^*(t)\right]\phi(t)$$
    and $\phi(0)=1.$ The resulting impulsive condition becomes
    $$x(nT^+)=(1-\eta_G)x(nT).$$ According to the Floquet's theory,
    solution $G^*(t)$ is locally asymptotically stable if $$\lambda_{G^*}=(1-\eta_G)\exp\left\{\mu_G\left(\mathcal{R}_G^0-1\right)\tau-\displaystyle\frac{2\gamma_G}{K_G}\int_{n\tau}^{(n+1)\tau}G^*(t)dt\right\}<1.$$
Following Lemma \ref{lemme_integrale},
\begin{equation}
    \begin{array}{ccc}
      \displaystyle\int_{n\tau}^{(n+1)\tau}G^*(s)ds & = &
      \displaystyle\frac{K_G}{\gamma_G}\left\{\ln(1-\eta_G)+\mu_G\left(\mathcal{R}_G^0-1\right)\tau\right\}.
    \end{array}
\end{equation}
Thus
\begin{equation}
    \begin{array}{ccl}
    \lambda_{G^*}  & = & (1-\eta_G)\exp\left\{-\mu_G\left(\mathcal{R}_G^0-1\right)\tau-2\ln(1-\eta_G)\right\} \\
       & = &
       \exp\left\{-\mu_G(\mathcal{R}_G^0-1)\tau-\ln(1-\eta_G)\right\}.
    \end{array}
\end{equation}
Since $\rho_G^0>1$, we have:
\begin{equation}
\begin{array}{ccl}
  \rho_G^0>1 & \Leftrightarrow & (1-\eta_G)\exp\{\mu_G(\mathcal{R}_G^0-1)\tau\}>1 \\
   & \Leftrightarrow & \ln(1-\eta_G)>-\mu_G(\mathcal{R}_G^0-1)\tau \\
   & \Leftrightarrow & -\mu_G(\mathcal{R}_G^0-1)\tau-\ln(1-\eta_G)<0
\end{array}
\end{equation}
and we deduce
$$\lambda_{G^*}<1.$$
Thus solution $G^*(t)$ of (\ref{herbe1}) is globally asymptotically
stable because the zero solution, in this case, is unstable.
Finally, we deduce that the grassland periodic solution $(0,0,
G^*(t))$ is globally asymptotically stable whenever
$\mathcal{R}_T^0<1$, $\mathcal{R}_G^0>1$ and $\rho_G^0>1$. Point 4
of Theorem \ref{prop} holds.
\end{itemize}

$\star$ Case 2: $\mu_G=0$.\\
The proof of points (i) and (ii) of Theorem \ref{prop} is fairly the
same as the proof of points 3 and 4. Indeed we first set, only in
System (\ref{herbe1}), $\mu_G=0$ and next, we substitute
$\mu_G(\mathcal{R}_G^0-1)$ by $\gamma_G$ in the rest of
the proof.\\
Solution $G$ of system (\ref{fa})-(\ref{ifa}) satisfy
\begin{equation}\label{herbe2}
 \left\{%
\begin{array}{lclc}
  \displaystyle\frac{dG}{dt} &\leq& \gamma_G\left(1-\displaystyle\frac{G}{K_G}\right)G& t\neq t_k,\\
  G(t_k^+)&=&(1-\eta_G)G(t_k) & t=t_k.
\end{array}
\right.
\end{equation}
Since $\rho_G^0= (1-\eta_G)\exp\{\gamma_G\tau\}<1$, it follows that
$G(t)\longrightarrow0$. Therefore, solutions $T_S$ and $T_{NS}$ of
(\ref{fa})-(\ref{ifa}) satisfy system (\ref{tstns}). Since
$\mathcal{R}_T^0<1$, one has $(T_S,~ T_{NS})\longrightarrow
(\bar{T}_S,~ \bar{T}_{NS})$. Point (iii) of Theorem \ref{prop}
holds.

\section*{Appendix C: Proof of Theorem \ref{sol_coexist}}
Taking new variables
$T_S(t)=e^{x(t)},~T_{NS}(t)=e^{y(t)},~G(t)=e^{z(t)}$ then system
$(\ref{fa})-(\ref{ifa})$ becomes,
\begin{equation}\label{change}
    \left\{
\begin{array}{lcl}
  \dot{x}(t) & = & -\omega_S-\mu_S-\sigma_Ge^z+(\gamma_S+\gamma_{NS}e^ye^{-x})\left(1-\displaystyle\frac{e^x+e^y}{K_T}\right),~~~t\neq t_n. \\
  \dot{y}(t) & = & -\mu_{NS}+\omega_Se^xe^{-y},~~~ t_{n+1}=t_n+\tau, \\
  \dot{z}(t) & = & \gamma_G\left(1-\displaystyle\frac{e^z}{K_G}\right)-\sigma_{NS}e^y-\mu_G, \\
  x(t^+) & = & x(t)+\ln(1-\eta_Sw(e^z)), ~~~t=t_n.\\
  y(t^+) & = & y(t), ~~n=0,1,2,...,\\
  z(t^+) & = & z(t)+\ln(1-\eta_G).
\end{array}
    \right.
\end{equation}

Let $X=C^1([0, \tau], \mathbb{R}^3)$, $Z=C^1([0, \tau],
\mathbb{R}^3)\times C^1([0, \tau], \mathbb{R}^3)$ and for $u=(x, y,
z)\in X$,
$$||u||=\max\limits_{t\in[0, \tau]}|x(t)|+\max\limits_{t\in[0,
\tau]}|y(t)|+\max\limits_{t\in[0, \tau]}|z(t)|.$$ Then $X$, $Z$ are
Banach spaces when they are endowed with the above norm $||\cdot||$.
\par Let, $$L: Dom(L)\subset X \rightarrow Z, \left(
                                      \begin{array}{c}
                                        x \\
                                        y \\
                                        z \\
                                      \end{array}
                                    \right)\rightarrow \left(\left(
                                                               \begin{array}{c}
                                                                 \dot{x} \\
                                                                 \dot{y} \\
                                                                 \dot{z} \\
                                                               \end{array}
                                                             \right)
                                    ,~\left(
                                        \begin{array}{c}
                                          \Delta x(t_n) \\
                                          \Delta y(t_n) \\
                                          \Delta z(t_n) \\
                                        \end{array}
                                      \right)
                                    \right)
$$ and $$N\left(
                                      \begin{array}{c}
                                        x \\
                                        y \\
                                        z \\
                                      \end{array}
                                    \right)= \left(N_1\left(
                                      \begin{array}{c}
                                        x \\
                                        y \\
                                        z \\
                                      \end{array}
                                    \right),N_2\left(
                                      \begin{array}{c}
                                        x \\
                                        y \\
                                        z \\
                                      \end{array}
                                    \right)\right),$$ where

$$N_1\left(
                                      \begin{array}{c}
                                        x \\
                                        y \\
                                        z \\
                                      \end{array}
                                    \right)=\left(
                                                               \begin{array}{c}
                                                                 -\omega_S-\mu_S-\sigma_Ge^z+(\gamma_S+\gamma_{NS}e^ye^{-x})\left(1-\displaystyle\frac{e^x+e^y}{K_T}\right) \\
                                                                 -\mu_{NS}+\omega_Se^xe^{-y} \\
                                                                 \gamma_G\left(1-\displaystyle\frac{e^z}{K_G}\right)-\sigma_{NS}e^y-\mu_G \\
                                                               \end{array}
                                                             \right),$$
$$N_2\left(
                                      \begin{array}{c}
                                        x \\
                                        y \\
                                        z \\
                                      \end{array}
                                    \right)=\left(
                                        \begin{array}{c}
                                          \ln(1-\eta_Sw(e^{z(\tau)})) \\
                                          0 \\
                                          \ln(1-\eta_G) \\
                                        \end{array}
                                      \right).$$

A direct computation leads to
$$KerL=\left\{\left(
                                      \begin{array}{c}
                                        x \\
                                        y \\
                                        z \\
                                      \end{array}
                                    \right): \left(
                                      \begin{array}{c}
                                        x \\
                                        y \\
                                        z \\
                                      \end{array}
                                    \right)=\left(
                                      \begin{array}{c}
                                        c_1 \\
                                        c_2 \\
                                        c_3 \\
                                      \end{array}
                                    \right)\in\mathbb{R}^3,~t\in[0,
                                    \tau]\right\}$$ and
$$ImL=\left\{\left(\left(
                                      \begin{array}{c}
                                        l \\
                                        m \\
                                        n \\
                                      \end{array}
                                    \right),~\left(
                                      \begin{array}{c}
                                        a \\
                                        b \\
                                        c \\
                                      \end{array}
                                    \right)\right)\in Z: \left(
                                      \begin{array}{c}
                                        \int_0^\tau l(t)dt+a=0 \\
                                        \int_0^\tau m(t)dt+b=0 \\
                                        \int_0^\tau n(t)dt+c=0 \\
                                      \end{array}
                                    \right)\right\}.$$
Since $ImL$ is closed in $Z$, $L$ is a Fredholm mapping of index
zero.
% (Definition \ref{fredholm}, Page \pageref{fredholm}).
Indeed,
$$Index(L)=dim(KerL)-dim(CoKerL)=dim(KerL)-(dim(Z)-dim(ImL))=3-(6-3)=0.$$
Thus following (Gaines and Mawhin (1977) \cite{Gaines1977}, Page
12), there exist two continuous projectors $P$
 and $Q$ such that the sequel $X
 \stackrel{P}{\longrightarrow}DomL\stackrel{L}{\longrightarrow}Z\stackrel{Q}{\longrightarrow}Z$
is exact i.e $ImP=KerL$ and
 $KerQ=ImL=Im(I-Q)$. It suffices to choose
 $$P\left(
                                      \begin{array}{c}
                                        x \\
                                        y \\
                                        z \\
                                      \end{array}
                                    \right)=\left(
                                      \begin{array}{c}
                                        x(\tau) \\
                                        y(\tau) \\
                                        z(\tau) \\
                                      \end{array}
                                    \right)~and ~Q\left(\left(
                                      \begin{array}{c}
                                        l \\
                                        m \\
                                        n \\
                                      \end{array}
                                    \right),~\left(
                                      \begin{array}{c}
                                        a \\
                                        b \\
                                        c \\
                                      \end{array}
                                    \right)\right)=\left(\displaystyle\frac{1}{\tau}\left(
                                                                       \begin{array}{c}
                                                                         \int_0^\tau l(s)dt+a \\
                                                                         \int_0^\tau m(s)dt+b \\
                                                                         \int_0^\tau n(s)dt+c \\
                                                                       \end{array}
                                                                     \right)
                                    ,~\left(
                                        \begin{array}{c}
                                          0 \\
                                          0 \\
                                          0 \\
                                        \end{array}
                                      \right)
                                    \right).$$
 One can verify that
                                    $LP\left(
                                      \begin{array}{c}
                                        x \\
                                        y \\
                                        z \\
                                      \end{array}
                                    \right)=0_{X}$ and $QL\left(
                                      \begin{array}{c}
                                        x \\
                                        y \\
                                        z \\
                                      \end{array}
                                    \right)=0_{Z}.$\\
 Furthermore, the generalized inverse $K_{P}: ImL\rightarrow KerP\cap
 Dom(L)$ of the map $L: KerP\cap
 Dom(L)\rightarrow ImL$ is given by
 $$K_{P}\left(\left(
                                      \begin{array}{c}
                                        l \\
                                        m \\
                                        n \\
                                      \end{array}
                                    \right),~\left(
                                      \begin{array}{c}
                                        a \\
                                        b \\
                                        c \\
                                      \end{array}
                                    \right)\right)=\left(
          \begin{array}{c}
            \int_0^tl(s)ds+a \\
            \int_0^tm(s)ds+b \\
            \int_0^\tau n(s)ds+c \\
          \end{array}
        \right).
 $$
 Indeed, let $u=(u_1, u_2, u_3)^T\in KerP\cap
 Dom(L),~(g,r)=((g_1,g_2,g_3),(r_1,r_2,r_3))\in ImL$,  we have
 \begin{equation}
    \begin{array}{ccl}
      K_{P}L(u(t)) & = & K_{P}(\dot{u},~\Delta u) \\
       & = & \int_0^t\dot{u}(s)ds+\Delta u \\
       & = & u(t)-u(0)+u(0)-u(\tau) \\
       & = & u(t)-P(u) \\
       & = & u(t),~ because ~u\in KerP
    \end{array}
 \end{equation}
and
\begin{equation}
    \begin{array}{ccl}
      LK_{P}(g(t),r) & = & L(\int_0^tg(s)ds+r) \\
       & = & (g(t),~-\int_0^\tau g(t)dt) \\
       & = & (g(t),~r)~ because ~(g,r)\in ImL. \\
    \end{array}
\end{equation}

 Thus,
 $$QN\left(
                                      \begin{array}{c}
                                        x \\
                                        y \\
                                        z \\
                                      \end{array}
                                    \right)=\left(\left(
                                                    \begin{array}{c}
                                                      A_1 \\
                                                      A_2 \\
                                                      A_3 \\
                                                    \end{array}
                                                  \right)
                                    ,~\left(
                                        \begin{array}{c}
                                          0 \\
                                          0 \\
                                          0 \\
                                        \end{array}
                                      \right)
                                    \right).$$
Furthermore,

\begin{equation}
    \begin{array}{ccl}
      K_{P}(I-Q)N\left(
                                      \begin{array}{c}
                                        x \\
                                        y \\
                                        z \\
                                      \end{array}
                                    \right) & = & K_{P}N\left(
                                      \begin{array}{c}
                                        x \\
                                        y \\
                                        z \\
                                      \end{array}
                                    \right)-K_{P}QN\left(
                                      \begin{array}{c}
                                        x \\
                                        y \\
                                        z \\
                                      \end{array}
                                    \right) \\
       & = & \left(
                                              \begin{array}{c}
                                                B_1 \\
                                                B_2 \\
                                                B_3 \\
                                              \end{array}
                                            \right)-\left(
                                                      \begin{array}{c}
                                                        C_1 \\
                                                        C_2 \\
                                                        C_3 \\
                                                      \end{array}
                                                    \right)+\left(
                                                              \begin{array}{c}
                                                                D_1 \\
                                                                D_2 \\
                                                                D_3 \\
                                                              \end{array}
                                                            \right),
    \end{array}
\end{equation}

 where
 \begin{equation}
    \begin{array}{lll}
      A_1 & = &  \gamma_S-\omega_S-\mu_S-\displaystyle\frac{1}{\tau}\displaystyle\int_0^\tau(\sigma_Ge^{z(t)}+\displaystyle\frac{\gamma_S}{K_T}(e^{x(t)}+e^{y(t)}))dt\\%+\displaystyle\frac{1}{\tau}\int_0^\tau\gamma_{NS}e^{-x(t)}e^{y(t)}\left(1-\displaystyle\frac{e^{x(t)}+e^{y(t)}}{K_T}\right)dt\\
      &&+\displaystyle\frac{1}{\tau}\int_0^\tau\gamma_{NS}e^{-x(t)}e^{y(t)}\left(1-\displaystyle\frac{e^{x(t)}+e^{y(t)}}{K_T}\right)dt+\displaystyle\frac{1}{\tau}\ln(1-\eta_Sw(e^{z(\tau)})),\\
      A_2 & = &  -\mu_{NS}+\displaystyle\frac{1}{\tau}\displaystyle\int_0^\tau\omega_Se^{x(t)}e^{-y(t)}dt,\\
      A_3 & = &  \gamma_G-\mu_G-\displaystyle\frac{1}{\tau}\displaystyle\int_0^\tau\displaystyle\frac{\gamma_G}{K_G}e^{z(t)}dt-\displaystyle\frac{1}{\tau}\displaystyle\int_0^\tau\sigma_{NS}e^{y(t)}dt+\displaystyle\frac{1}{\tau}\ln(1-\eta_G),\\
\end{array}
 \end{equation}
\begin{equation}
    \begin{array}{lll}
      B_1 & = & \displaystyle\int_0^t\left(-\omega_S-\mu_S-\sigma_Ge^{z(s)}+(\gamma_S+\gamma_{NS}e^{y(s)}e^{-x(s)})\left(1-\displaystyle\frac{e^{x(s)}+e^{y(s)}}{K_T}\right)\right)ds,\\
      B_2 & = & \displaystyle\int_0^t\left(-\mu_{NS}+\omega_Se^{x(s)}e^{-y(s)}\right)ds, \\
      B_3 & = & \displaystyle\int_0^t\left(\gamma_G\left(1-\displaystyle\frac{e^{z(s)}}{K_G}\right)-\sigma_{NS}e^{y(s)}-\mu_G\right)ds, \\
\end{array}
 \end{equation}
 \begin{equation}
    \begin{array}{lll}
      C_1 & = & \displaystyle\frac{t}{\tau}\left(\displaystyle\int_0^\tau\left(-\omega_S-\mu_S-\sigma_Ge^{z(s)}+(\gamma_S+\gamma_{NS}e^{y(s)}e^{-x(s)})\left(1-\displaystyle\frac{e^{x(s)}+e^{y(s)}}{K_T}\right)\right)ds\right.\\
      &&\left.+\ln(1-\eta_Sw(e^{z(\tau)}))\right),\\
      C_2 & = & \displaystyle\frac{t}{\tau}\displaystyle\int_0^\tau\left(-\mu_{NS}+\omega_Se^{x(s)}e^{-y(s)}\right)ds, \\
      C_3 & = & \displaystyle\frac{t}{\tau}\left(\displaystyle\int_0^\tau\left(\gamma_G\left(1-\displaystyle\frac{e^{z(s)}}{K_G}\right)-\sigma_{NS}e^{y(s)}-\mu_G\right)ds+\ln(1-\eta_G)\right), \\
      &&\\
      D_1 & = & \ln(1-\eta_Sw(e^{z(\tau)})),\\
      D_2 & = & 0,\\
      D_3 & = & \ln(1-\eta_G). \\
     \end{array}
 \end{equation}

 Clearly, $QN$ and $K_{P}(I-Q)N$ are continuous then for any open bounded set $\Omega\subset X$, $QN(\bar{\Omega})$ is
 bounded. Furthermore, let $t_1,~t_2\in [0,~\tau]$, $u(t)=(x, y,
 z)(t)$, $$f(t,u(t))=\left(
                                                               \begin{array}{c}
                                                                 -\omega_S-\mu_S-\sigma_Ge^z+(\gamma_S+\gamma_{NS}e^ye^{-x})\left(1-\displaystyle\frac{e^x+e^y}{K_T}\right) \\
                                                                 -\mu_{NS}+\omega_Se^xe^{-y} \\
                                                                 \gamma_G\left(1-\displaystyle\frac{e^z}{K_G}\right)-\sigma_{NS}e^y-\mu_G \\
                                                               \end{array}
                                                             \right)$$
 and
 $$a=\left(
                                        \begin{array}{c}
                                          \ln(1-\eta_Sw(e^{z(\tau)})) \\
                                          0 \\
                                          \ln(1-\eta_G) \\
                                        \end{array}
                                      \right).$$
 We have\\
 $|K_{P}(I-Q)N(u(t_2))-K_{P}(I-Q)N(u(t_1))|$
\begin{equation}\label{ekicon}
    \begin{array}{ccl}
       & = & \left|\displaystyle\int_0^{t_2}f(s,u(s))ds-\displaystyle\int_0^{t_1}f(s,u(s))ds\right. \left.-\displaystyle\frac{t_2}{\tau}\left\{\displaystyle\int_0^{\tau}f(s,u(s))ds+a\right\}\right.\\
       &&\left.+\displaystyle\frac{t_1}{\tau}\left\{\displaystyle\int_0^{\tau}f(s,u(s))ds+a\right\}\right|\\
       & = & \left|\displaystyle\int_{t_1}^{t_2}f(s,u(s))ds-\displaystyle\frac{(t_2-t_1)}{\tau}\left\{\displaystyle\int_0^{\tau}f(s,u(s))ds+a\right\}\right| \\
       & \leq &
       |t_2-t_1|\max\limits_{t\in[0,~\tau]}|f(t,u(t))|+\displaystyle\frac{|t_2-t_1|}{\tau}\left(\tau\max\limits_{t\in[0,~\tau]}|f(t,u(t))|+a\right)
       \\
       &\leq & |t_2-t_1|\left(2\max\limits_{t\in[0,~\tau]}|f(t,u(t))|+\displaystyle\frac{a}{\tau}\right)
    \end{array}
\end{equation}
and
\begin{equation}\label{born}
    \begin{array}{ccl}
      |K_{P}(I-Q)N(u(t))| & \leq & |a|+\tau\max\limits_{t\in[0,~\tau]}|f(t,u(t))|+|a|+\tau\max\limits_{t\in[0,~\tau]}|f(t,u(t))| \\
       & \leq &
       2\left(|a|+\tau\max\limits_{t\in[0,~\tau]}|f(t,u(t))|\right).
    \end{array}
\end{equation}
Then using relations (\ref{ekicon}), (\ref{born}) and the
 Arzela-Ascoli's theorem (Sonntag (1997) \cite{Sonntag1997} Theorem 3.1, Page 314) we deduce that\\ $K_{P}(I-Q)N(\bar{\Omega})$ is
 compact. Thus, $N$ is a $L-$compact mapping on $\bar{\Omega}$.  The isomorphism
 $J$ of $ImQ$ onto $KerL$ may be defined by
 $$J: ImQ\rightarrow X,~\left(\left(
                                      \begin{array}{c}
                                        u \\
                                       v \\
                                        w \\
                                      \end{array}
                                    \right),~\left(
                                      \begin{array}{c}
                                        0 \\
                                        0 \\
                                        0 \\
                                      \end{array}
                                    \right)\right)\rightarrow \left(
                                      \begin{array}{c}
                                        u \\
                                        v \\
                                        w \\
                                     \end{array}
                                    \right).$$
 Now we reach the position to search for an appropriate open,
 bounded subset $\Omega$ for the application of the continuation
 theorem, i.e we search $M_0$ such that every $\tau$-periodic solution
 of system $(\ref{fa})-(\ref{ifa})$ satisfied $|x(t)|+|y(t)|+|z(t)|\leq
 M_0$ with $0\leq t\leq \tau.$\\ Corresponding to the operator equation
 $Lx=\beta Nx$, $\beta\in(0,1)$, we have
\begin{equation}\label{rechange}
    \left\{
\begin{array}{lcl}
  \dot{x}(t) & = & \beta\left[-\omega_S-\mu_S-\sigma_Ge^z+(\gamma_S+\gamma_{NS}e^ye^{-x})\left(1-\displaystyle\frac{e^x+e^y}{K_T}\right)\right],~~~t\neq t_n. \\
  \dot{y}(t) & = & \beta\left[-\mu_{NS}+\omega_Se^xe^{-y}\right],~~~ t_{n+1}=t_n+\tau, \\
  \dot{z}(t) & = & \beta\left[\gamma_G\left(1-\displaystyle\frac{e^z}{K_G}\right)-\sigma_{NS}e^y-\mu_G\right], \\
  x(t^+)-x(t) & = & \beta\ln(1-\eta_Sw(e^z)), ~~~t=t_n.\\
  y(t^+)-y(t) & = & 0, ~~n=0,1,2,...,\\
  z(t^+)-z(t) & = & \beta\ln(1-\eta_G).
\end{array}
    \right.
\end{equation}
Suppose that $(x(t), y(t), z(t))\in X$ is an arbitrary solution of
system (\ref{rechange}) for a certain $\beta\in(0,1)$. Integrating
on both sides of $(\ref{rechange})$ over the interval $[0,~ \tau]$,
we obtain
\begin{equation}\label{integrale}
    \left\{
\begin{array}{rcl}
  \displaystyle\int_0^\tau\left[-\displaystyle\frac{\gamma_S}{K_T}(e^x+e^y)+\gamma_{NS}e^ye^{-x}\left(1-\displaystyle\frac{e^x+e^y}{K_T}\right)-\sigma_Ge^z\right]dt & = & (\omega_S+\mu_S-\gamma_S)\tau\\
  &&-\ln(1-\eta_Sw(e^{z(\tau)})), \\
  \displaystyle\int_0^\tau\omega_Se^xe^{-y}dt & = & \mu_{NS}\tau, \\
  \displaystyle\int_0^\tau\left[\displaystyle\frac{\gamma_G}{K_G}e^z+\sigma_{NS}e^y\right]dt&=&(\gamma_G-\mu_G)\tau+\ln(1-\eta_G). \\
  \end{array}
    \right.
\end{equation}
Note that assumptions of Theorem \ref{sol_coexist} lead
$$(\gamma_G-\mu_G)\tau+\ln(1-\eta_G)>0.$$
Since $X$ is a Banach space
and $(x(t), y(t), z(t))\in X$, there exist
$\overline{\xi},~\underline{\xi},~\overline{\eta},~\underline{\eta},~\overline{\tau}$
and $\underline{\tau}$ such that
\begin{equation}
    \begin{array}{cc}
      x(\overline{\xi})=\max\limits_{0\leq t\leq \tau}x(t), & x(\underline{\xi})=\min\limits_{0\leq t\leq \tau}x(t), \\
      y(\overline{\eta})=\max\limits_{0\leq t\leq \tau}y(t), & y(\underline{\eta})=\min\limits_{0\leq t\leq \tau}y(t), \\
      z(\overline{\tau})=\max\limits_{0\leq t\leq \tau}z(t), & z(\underline{\tau})=\min\limits_{0\leq t\leq
      \tau}z(t).
    \end{array}
\end{equation}
It follows from system $(\ref{integrale})$ that
\begin{equation}
\begin{array}{lll}
  \displaystyle\int_0^\tau|\dot{x}(t)|dt & \leq & (\omega_S+\mu_S)\tau+\displaystyle\int_0^\tau\left|-\sigma_Ge^z+(\gamma_S+\gamma_{NS}e^ye^{-x})\left(1-\displaystyle\frac{e^x+e^y}{K_T}\right)\right|dt \\
   & \leq & (\omega_S+\mu_S)\tau+\sigma_G\tau e^{z(\overline{\tau})}+\displaystyle\int_0^\tau(\gamma_S+\gamma_{NS}e^ye^{-x})dt \\
   & \leq &
   (\omega_S+\mu_S)\tau+\sigma_G\tau e^{z(\overline{\tau})}+\displaystyle\int_0^\tau(\gamma_S+\gamma_{NS}e^y)dt\\
   & \leq &
   (\omega_S+\mu_S+\gamma_S)\tau+\sigma_G\tau e^{z(\overline{\tau})}+\gamma_{NS}\tau e^{y(\overline{\eta})},
\end{array}
\end{equation}
\begin{equation}
\begin{array}{lll}
  \displaystyle\int_0^\tau|\dot{y}(t)|dt & \leq & \mu_{NS}\tau+\displaystyle\int_0^\tau|\omega_Se^xe^{-y}|dt \\
   & \leq & 2\mu_{NS}\tau
   \end{array}
\end{equation}
and
\begin{equation}
\begin{array}{lll}
  \displaystyle\int_0^\tau|\dot{z}(t)|dt & \leq & (\gamma_G+\mu_G)\tau+\displaystyle\int_0^\tau\left(\displaystyle\frac{\gamma_G}{K_G}e^z+\sigma_{NS}e^y\right)dt \\
   & \leq & (\gamma_G+\mu_G)\tau+(\gamma_G-\mu_G)\tau+\ln(1-\eta_G)\\
   & \leq & 2\gamma_G\tau+\ln(1-\eta_G).
   \end{array}
\end{equation}
Recall that
$$
\begin{array}{ccl}
  \rho_G^0>1 & \Longleftrightarrow & (\gamma_G-\mu_G)\tau+\ln(1-\eta_G)>0 \\
   & \Longrightarrow & 2\gamma_G\tau+\ln(1-\eta_G)>0.
\end{array}
$$
 Since
\begin{equation}
\begin{array}{lll}
  \tau\left(\displaystyle\frac{\gamma_G}{K_G}e^{z(\underline{\tau})}+\sigma_{NS}e^{y(\underline{\eta})}\right) & \leq & (\gamma_G-\mu_G)\tau-\displaystyle\int_0^\tau\dot{z}(t)dt \\
   & \leq & (\gamma_G-\mu_G)\tau+\ln(1-\eta_G),
   \end{array}
\end{equation}
then
\begin{equation}
\begin{array}{lll}
  z(\underline{\tau}) & \leq & \ln\left\{\displaystyle\frac{K_G}{\gamma_G}\left((\gamma_G-\mu_G)+\displaystyle\frac{\ln(1-\eta_G)}{\tau}\right)\right\} \\
  &&\\
  y(\underline{\eta}) & \leq &
  \ln\left\{\displaystyle\frac{1}{\sigma_{NS}}\left((\gamma_G-\mu_G)+\displaystyle\frac{\ln(1-\eta_G)}{\tau}\right)\right\}.
   \end{array}
\end{equation}
Moreover, from
$$\displaystyle\int_0^\tau\omega_Se^xe^{-y}dt=\mu_{NS}\tau$$
 we deduce
$$x(\underline{\xi})\leq\ln\left(\displaystyle\frac{\mu_{NS}}{\omega_S}e^{y(\overline{\eta})}\right)\, and\,x(\overline{\xi})\geq\ln\left(\displaystyle\frac{\mu_{NS}}{\omega_S}\right).$$
Furthermore,
$$\int_0^\tau\dot{z}(t)dt=\int_0^\tau\left[\gamma_G-\mu_G-\displaystyle\frac{\gamma_G}{K_G}e^{z(t)}-\sigma_{NS}e^{y(t)}\right]=-\ln(1-\eta_G).$$
Using
\begin{equation}
\begin{array}{ccl}
 0<(\gamma_G-\mu_G)\tau+\ln(1-\eta_G)  & = & \displaystyle\int_0^\tau\left[\displaystyle\frac{\gamma_G}{K_G}e^{z(t)}+\sigma_{NS}e^{y(t)}\right]dt \\
 &&\\
   & \leq & \displaystyle\int_0^\tau\left[\displaystyle\frac{\gamma_G}{K_G}e^{z(\overline{\tau})}+\sigma_{NS}e^{y(\overline{\eta})}\right]dt=\tau\left[\displaystyle\frac{\gamma_G}{K_G}e^{z(\overline{\tau})}+\sigma_{NS}e^{y(\overline{\eta})}\right],
\end{array}
\end{equation}
%$0<\mu_G(\mathcal{R}_2^0-1)\tau+\ln(1-\eta_G)=\displaystyle\int_0^\tau\left[\displaystyle\frac{\gamma_G}{K_G}e^{z(t)}+\sigma_{NS}e^{y(t)}\right]dt\leq\int_0^\tau\left[\displaystyle\frac{\gamma_G}{K_G}e^{z(\overline{\tau})}+\sigma_{NS}e^{y(\overline{\eta})}\right]dt=\tau\left[\displaystyle\frac{\gamma_G}{K_G}e^{z(\overline{\tau})}+\sigma_{NS}e^{y(\overline{\eta})}\right],$\\
then there exist $\zeta_1>0,~\zeta_2>0$ such that
\begin{itemize}
    \item[$\bullet$]$\zeta_1+\zeta_2=(\gamma_G-\mu_G)+\displaystyle\frac{\ln(1-\eta_G)}{\tau}$,
    \item[$\bullet$] $\displaystyle\frac{\gamma_G}{K_G}e^{z(\overline{\tau})}\geq\zeta_1$ and
    \item[$\bullet$] $\sigma_{NS}e^{y(\overline{\eta})}\geq\zeta_2.$
\end{itemize}
Thus,
\begin{equation}
\begin{array}{llllll}
  z(\overline{\tau}) & \geq & \ln\left\{\displaystyle\frac{K_G}{\gamma_G}\zeta_1\right\}, &y(\overline{\eta}) & \geq &
  \ln\left\{\displaystyle\frac{1}{\sigma_{NS}}\zeta_2\right\}.
   \end{array}
\end{equation}
So, keeping in mind that by assumptions of Theorem
\ref{sol_coexist}, one has
$$(\gamma_G-\mu_G)+\displaystyle\frac{\ln(1-\eta_G)}{\tau}>0,$$
\begin{equation}
    \begin{array}{lllr}
      x(t) & \leq & x(\underline{\xi})+\int_0^\tau|\dot{x}(t)|dt& \\
       & \leq & \ln\left(\displaystyle\frac{\mu_{NS}}{\omega_S}e^{y(\overline{\eta})}\right)+ (\omega_S+\mu_S+\gamma_S)\tau+\sigma_G\tau e^{z(\overline{\tau})}+\gamma_{NS}\tau e^{y(\overline{\eta})}&:=x_u,\\
      y(t) & \leq & y(\underline{\eta})+\int_0^\tau|\dot{y}(t)|dt&\\
       & \leq & \ln\left\{\displaystyle\frac{1}{\sigma_{NS}}\left((\gamma_G-\mu_G)+\displaystyle\frac{\ln(1-\eta_G)}{\tau}\right)\right\}+2\mu_{NS}\tau&:=y_u, \\
      z(t) & \leq & z(\underline{\tau})+\int_0^\tau|\dot{z}(t)|dt& \\
       & \leq
       &\ln\left\{\displaystyle\frac{K_G}{\gamma_G}\left((\gamma_G-\mu_G)+\displaystyle\frac{\ln(1-\eta_G)}{\tau}\right)\right\}+2\gamma_G\tau+\ln(1-\eta_G)&:=z_u,\\
       x(t) & \geq & x(\overline{\xi})-\int_0^\tau|\dot{x}(t)|dt& \\
       & \geq & \ln\left(\displaystyle\frac{\mu_{NS}}{\omega_S}\right)- (\omega_S+\mu_S+\gamma_S)\tau-\sigma_G\tau e^{z(\overline{\tau})}-\gamma_{NS}\tau e^{y(\overline{\eta})}&:=x_l,\\
      y(t) & \geq & y(\overline{\eta})-\int_0^\tau|\dot{y}(t)|dt&\\
       & \geq & \ln\left\{\displaystyle\frac{\zeta_2}{\sigma_{NS}}\right\}-2\mu_{NS}\tau&:=y_l, \\
      z(t) & \geq & z(\overline{\tau})-\int_0^\tau|\dot{z}(t)|dt& \\
       & \geq
       &\ln\left\{\displaystyle\frac{K_G}{\gamma_G}\zeta_1\right\}-2\gamma_G\tau-\ln(1-\eta_G)&:=z_l,
    \end{array}
\end{equation}
therefore we obtain,
\begin{equation}
    \begin{array}{ccc}
      \max\limits_{0\leq t\leq \tau}|x(t)| & \leq & \max\left\{|x_u|, |x_l|\right\}:=M_x, \\
      \max\limits_{0\leq t\leq \tau}|y(t)| & \leq & \max\left\{|y_u|, |y_l|\right\}:=M_y, \\
      \max\limits_{0\leq t\leq \tau}|z(t)| & \leq & \max\left\{|z_u|,
      |z_l|\right\}:=M_z.
    \end{array}
\end{equation}
$M_x$, $M_y$ and $M_z$ are independent of $\beta$. \par Now let us
consider the algebraic equations
\begin{equation}\label{algebric}
    \left\{
\begin{array}{rcl}
  \gamma_S-\omega_S-\mu_S+\displaystyle\frac{1}{\tau}\ln(1-\eta_Sw(e^{z(\tau)}))&~~&~~~~\\
  -\displaystyle\frac{1}{\tau}\displaystyle\int_0^\tau\left[\displaystyle\frac{\gamma_S}{K_T}(e^x+e^y)-\beta\gamma_{NS}e^ye^{-x}\left(1-\displaystyle\frac{e^x+e^y}{K_T}\right)+\beta\sigma_Ge^z\right]dt & = & 0, \\
  -\mu_{NS}+\displaystyle\frac{1}{\tau}\int_0^\tau\omega_Se^xe^{-y}dt & = & 0, \\
  (\gamma_G-\mu_G)+\displaystyle\frac{1}{\tau}\ln(1-\eta_G)-\displaystyle\frac{1}{\tau}\displaystyle\int_0^\tau\left[\displaystyle\frac{\gamma_G}{K_G}e^z+\sigma_{NS}e^y\right]dt&=&0
  \end{array}
    \right.
\end{equation}
for $(x,y,z)\in\mathbb{R}^3$, where $\beta\in[0,~1]$ is a parameter.
By carrying out similar arguments as in system (\ref{integrale}),
one can show that any solution $(x^*, y^*, z^*)$ of
$(\ref{algebric})$ with $\beta\in[0,~1]$ satisfies
\begin{equation}\label{ine}
l_1\leq x^*\leq L_1,~~l_2\leq y^*\leq L_2,~~l_3\leq z^*\leq L_3.
\end{equation}
Taking $M_0=M_x+M_y+M_z+M_k$ where $M_k$>0 is taken sufficiently
large such that\\ $M_k>|l_1|+|L_1|+|l_2|+|L_2|+|l_3|+|L_3|$,
 we define $\Omega=\left\{(x,y,z)^T\in X:
||(x,y,z)||<M_0\right\}$, then $\Omega$ verifies the requirement
$(1)$ of The Continuation Theorem (Gaines and Mahwin (1977)
\cite{Gaines1977}, Page 40). When $(x,y,z)\in\partial\Omega\cap
KerL=\partial\Omega\cap\mathbb{R}^3$, $(x,y,z)$ is a constant vector
in $\mathbb{R}^3$ with $||(x,y,z)||=M_0$. Then from $(\ref{ine})$
and the definition of $M_0$, one has
\begin{equation}\label{q}
QN\left(
                                      \begin{array}{c}
                                        x \\
                                        y \\
                                        z \\
                                      \end{array}
                                    \right)=\left(\left(
                                                    \begin{array}{c}
                                                      A^{(1)} \\
                                                      A^{(2)} \\
                                                      A^{(3)} \\
                                                    \end{array}
                                                  \right)
                                    ,~\left(
                                        \begin{array}{c}
                                          0 \\
                                          0 \\
                                          0 \\
                                        \end{array}
                                      \right)
                                    \right)\neq\left(\left(
                                                    \begin{array}{c}
                                                      0 \\
                                                      0 \\
                                                      0 \\
                                                    \end{array}
                                                  \right)
                                    ,~\left(
                                        \begin{array}{c}
                                          0 \\
                                          0 \\
                                          0 \\
                                        \end{array}
                                      \right)
                                    \right),
\end{equation}
where,
\begin{equation}
    \begin{array}{lll}
      A^{(1)} & = &  \gamma_S-\omega_S-\mu_S-\displaystyle\frac{1}{\tau}\displaystyle\int_0^\tau\left(\sigma_Ge^{z}+\displaystyle\frac{\gamma_S}{K_T}(e^{x}+e^{y})\right)dt+\displaystyle\frac{1}{\tau}\displaystyle\int_0^\tau\gamma_{NS}e^{-x}e^{y}\left(1-\displaystyle\frac{e^{x}+e^{y}}{K_T}\right)dt\\
      &&+\displaystyle\frac{1}{\tau}\ln(1-\eta_Sw(e^{z(\tau)})),\\
      A^{(2)} & = &  -\mu_{NS}+\displaystyle\frac{1}{\tau}\displaystyle\int_0^\tau\omega_Se^{x}e^{-y}dt,\\
      A^{(3)} & = &  \gamma_G-\mu_G-\displaystyle\frac{1}{\tau}\displaystyle\int_0^\tau\displaystyle\frac{\gamma_G}{K_G}e^{z}dt-\displaystyle\frac{1}{\tau}\displaystyle\int_0^\tau\sigma_{NS}e^{y}dt+\displaystyle\frac{1}{\tau}\ln(1-\eta_G),
      \end{array}
 \end{equation}
that is, the first part of $(2)$ of The Continuation Theorem (Gaines
and Mahwin (1977) \cite{Gaines1977}, Page 40) is valid.
\par
To compute the Brouwer degree, let us consider the homotopy
$$H_\beta((x,y,z)^T)=\beta JQN((x,y,z)^T)+(1-\beta)V((x,y,z)^T),~~\beta\in[0,~1],$$
where
$$V((x,y,z)^T)=\left(
                 \begin{array}{c}
                   \gamma_S-\omega_S-\mu_S+\displaystyle\frac{1}{\tau}\ln\left(1-\eta_Sw(e^{z(\tau)})\right)-\displaystyle\frac{\gamma_S}{K_T}(e^x+e^y) \\
                   -\mu_{NS}+\omega_Se^xe^{-y} \\
                   (\gamma_G-\mu_G)+\displaystyle\frac{1}{\tau}\ln(1-\eta_G)-\displaystyle\frac{\gamma_G}{K_G}e^z-\sigma_{NS}e^y \\
                 \end{array}
               \right).
$$
From $(\ref{algebric})$, it follows that $0\notin
H_\beta(\partial\Omega\cap KerL)$ for $\beta\in[0,~1]$. Moreover,
since
$-\displaystyle\frac{\gamma_S\gamma_G(\omega_S+\mu_{NS})}{K_TK_G}\neq0$,
the algebraic equation $V((x,y,z)^T)=0$ has a unique solution
$(e^{x^*},e^{y^*},e^{z^*})^T\in\mathbb{R}^3$. We compute the Brouwer
degree ($deg(\cdot,\cdot,\cdot)$) by using the invariance property
of homotopy \cite{Fan2004}, one has
\begin{equation}
    \begin{array}{ccl}
      deg(JQN, \Omega\cap KerL,0) & = & deg(V, \Omega\cap
KerL,0) \\
       & = &  \sum\limits_{p\in
V^{-1}(0)}sign(J_V(p))\\
       & = &sign\left[\det\left(                                                                                                                \begin{array}{ccc}
                                                                                                                  -\displaystyle\frac{\gamma_S}{K_T}e^{x^*} & -\displaystyle\frac{\gamma_S}{K_T}e^{y^*} & 0 \\
                                                                                                                  \mu_{NS} & -\mu_{NS} & 0 \\
                                                                                                                  0 & -\sigma_{NS}e^{y^*} & -\displaystyle\frac{\gamma_G}{K_G}e^{z^*} \\
                                                                                                                \end{array}
                                                                                                              \right)\right]\\
                                                                                                              &=&sign\left[-\displaystyle\frac{\gamma_G\gamma_S}{K_TK_G}(\mu_{NS}+\omega_S)e^{x^*}e^{z^*}\right],\,since\,\omega_Se^{x^*}=\mu_{NS}e^{y^*}\\
&= & -1 \neq  0.
    \end{array}
\end{equation}
 By now, we
have prove that $\Omega$ verifies all requirements of The
Continuation Theorem (Gaines and Mahwin (1977) \cite{Gaines1977},
Page 40), then $Lx=Nx$ has at least one solution in
$DomL\cap\overline{\Omega}$, i.e. system $(\ref{change})$ has at
least one solution in $DomL\cap\overline{\Omega}$, say
$(x^*(t),y^*(t),z^*(t))^T$. Set
$T_S^*(t)=\exp(x^*(t)),~T_{NS}^*(t)=\exp(y^*(t),~G^*(t)=\exp(z^*(t))$,
then $(T_S^*(t),T_{NS}^*(t),G^*(t))^T$ is a positive and
$\tau$-periodic solution of system $(\ref{fa})-(\ref{ifa})$. This
completes the proof.

\section*{Appendix D: Particular values of $\sigma_{NS}$, $\sigma_{G}$ and $\tau$}
With respect to relations (\ref{SG}) and (\ref{SF}), we set
\begin{equation}\label{TGF}
\left\{
\begin{array}{rcl}
\sigma_G^\star(\tau)&=& \displaystyle\frac{1}{G_{int}}\left(\gamma_S-(\mu_S+\omega_S+\mu_{NS})\right),\\
&&\\
&=& \displaystyle\frac{(\gamma_S-(\mu_S+\omega_S+\mu_{NS}))}{\displaystyle\frac{K_G}{\gamma_G}\left(\gamma_G-\mu_G+\displaystyle\frac{\ln(1-\eta_G)}{\tau}\right)},\\
&&\\
 \sigma_{NS}^\star(\tau)&=&
\displaystyle\frac{1}{\overline{T}_{NS}}\left(\gamma_G-\mu_G+\displaystyle\frac{\ln(1-\eta_G)}{\tau}\right),\\
&&\\
\tau^\star
&=&-\displaystyle\frac{\ln(1-\eta_G)}{\gamma_G\left(1-\displaystyle\frac{1}{\mathcal{R}_T^G}\right)}.\\
  \end{array}
\right.
\end{equation}
One can note that $\sigma_G^\star$, $\sigma_{NS}^\star$ and
$\tau^\star$ determined regions of stability/instability of forest
and grassland solutions, with respect to $\sigma_G$, $\sigma_{NS}$
and $\tau$ variations.
\end{document}